\numberwithin{equation}{section}
\newtheorem{Theorem}{Theorem}[section]
\newtheorem{Lemma}{Lemma}[section]
\newtheorem{Proposition}{Proposition}[section]
\newtheorem{Def}{Definition}[section]
\newtheorem{Remark}{Remark}[section]
\newtheorem{Corollary}{Corollary}[section]
\begin{document}
\title[Asymptotics of Dirichlet Problems to Fractional $p$-Laplacian]
 {Asymptotics of Dirichlet Problems to Fractional $p$-Laplacian Functionals: Approach in De Giorgi sense}

\author{Raphael Feng LI}
\address{ Department of Mathematics,
 Nagoya University, Nagoya 464-8602, Japan}
\email{d15002m@math.nagoya-u.ac.jp}

%%%%----------------------------------------------------------------------------------%%%%%%
\begin{abstract}
In this paper we firstly study the limit of minimizers of the fractional $W^{s,p}$-norms as $p\rightarrow+\infty$ in De Giorgi sense. In particular, we analyzed the $\Gamma$-convergence of non-homogeneous Dirichlet boundary problem for fractional $p$-Laplacian in this approximation process, and proved that as $p\rightarrow+\infty$ the minimizers of fractional $p$-Laplacian with Dirichlet boundary $\Gamma$-converges to a minimizer of H\"{o}lder $\infty$-Laplacian under the same Dirichlet boundary condition.

On the other hand, we first investigate the asymptotic behaviour of non-homogeneous fractional $p$-functionals when $k\rightarrow s$ from above; then we study the approximation process as $k\rightarrow s$ from below of a free fractional $p$-functional, during which we will find some special phenomenon different from the case from above. Both of the way to dispose these two asymptotic directions are in the De Giorgi sense.
\end{abstract}

\keywords{fractional $p$-Laplacian, Dirichlet problem, $\Gamma$-convergence, nonlocal Sobolev spaces}

\maketitle

%%%%----------------------------------------------------------------------------------%%%%%%

\section{Organization of This Paper}
In this paper, we mainly use the $\Gamma$-convergence introduced by E. De Giorgi in 1970's to investigate some approximation phenomenons on the fractional $p$-Laplacian equations and related functionals. The $\Gamma$-convergence is defined as:
\begin{Def}[$\Gamma$-convergence]\label{GammaConvDef}
Let $X$ be a metric space. A sequence $\{E_n\}$ of functionals $E_n:X\rightarrow\overline{\mathbb{R}}:=\mathbb{R}\cup\{\infty\}$ is said to $\Gamma(X)$-convergence to $E_{\infty}:X\rightarrow\overline{\mathbb{R}}$, and we write $\Gamma(X)$-$\lim\limits_{n\rightarrow+\infty}E_n=E_{\infty}$, if the following hold:\\
(i) for every $u\in X$ and $\{u_n\}\subset X$ such that $u_n\rightarrow u$ in $X$, we have
$$
E_{\infty}(u)\leq\liminf\limits_{n\rightarrow+\infty}E_n(u_n);
$$
(ii) for every $u\in X$ there exists a sequence $\{u_n\}\subset X$(called a recovery sequence) such that $u_n\rightarrow u$ in $X$
and
$$
E_{\infty}(u)\geq\limsup\limits_{n\rightarrow+\infty}E_n(u_n).
$$
\end{Def}
For further information, one can refer to \cite{Braides,DalMaso}.
\medskip

In section \ref{ptoinfty}, we utilize the settings supposed as, $0<\alpha<1$, $p>\frac{N}{\alpha}$ and $\Omega$ being an open bounded domain in $\mathbb{R}^N$ with Lipschitz boundary. We mainly study the approximation process of the minimizers of fractional $p$-functionals
$$
\int_{\Omega\times\Omega}\frac{|u(x)-u(y)|^p}{|x-y|^{\alpha p}}dxdy,
$$
as $p\rightarrow+\infty$ under suitable non-homogeneous Dirichlet condition in some admissible space $X$.

In section \ref{svaryingprocess}, we investigate the behaviour of homogeneous Dirichlet problem of the fractional $p$-functional
$$
\min_{u\in Y}\left(
\int_{B_{tR}(\Omega)\times B_{tR}(\Omega)}\frac{|u(x)-u(y)|^p}{|x-y|^{N+kp}}dxdy+\int_{\Omega}fudx
\right),
$$
when $\{k\}\subset(0,1)$ decreases to some $s\in(0,1)$. Here $B_{tR}(\Omega)$ is define as the $N$-dimensional ball with diameter $tR$ located at the same center as the smallest ball containing $\Omega$, in which, $t>1$ and $R$ is the diameter of $\Omega$. We assume that $0<s<k<1$, $p\in(1,+\infty)$ and $\Omega$ being an open bounded set in $\mathbb{R}^N$ without regularity assumption on $\partial\Omega$. In order to investigate the asymptotics smoothly, we introduce a $relative$-$nonlocal$ Sobolev space $\widetilde{W}^{s,p}_{0,tR}(\Omega)$. And then in some admissible space $Y$, we investigate the asymptotic behaviours of the functionals

Then under the case $k\rightarrow s$ from below, we assume that $0<k<s<1$, $p\in(1,+\infty)$ and $\Omega$ being an open bounded set in $\mathbb{R}^N$, without further regularity assumption on $\partial\Omega$. Then inspired by \cite{DM} (see also \cite{Li}), we construct a space
$$
W^{s^-,p}_0(\Omega):=\bigcap\limits_{0<k<s}(W^{k,p}_0(\Omega)\cap W^{s,p}(\Omega)),
$$
to study the convergence of the free functional
$$
\int_{\Omega\times\Omega}\frac{|u(x)-u(y)|^p}{|x-y|^{N+kp}}dxdy
$$
when $k$ increases to some $s\in(0,1)$. We will see that we can not get a ideal result as in the case approximating from above. And as a byproduct, we give an equivalence description between the spaces $W^{s^-,p}_0(\Omega)$ and $W^{s,p}_0(\Omega)$ in De Giorgi sense. For more information on this topic, one can see \cite{DM,Li}.

\section{Asymptotic Behaviour as $p\rightarrow+\infty$}\label{ptoinfty}

\subsection{Setting of the Problem}

Let $\Omega$ be a bounded domain in $\mathbb{R}^N$ with Lipschitz boundary. It's well-known that the minimizers $u_p$ of the integrals
$$\int_{\Omega}|\nabla u|^p,$$
under suitable conditions, as p$\rightarrow+\infty$, approximate to the minimizer $u$ of the equation
$$\Delta_{\infty}u=\sum_{i,j=1,2,...,N}u_{ij}u_iu_j=0\ on\ \Omega,$$
with $u_i=\frac{\partial u}{\partial x_i}$ and $u_{ij}=\frac{\partial^2u}{\partial x_i \partial x_j}$, which is usually referred to as $\infty$-Laplacian equation, introduced by Aronsson in the fundamental work \cite{Aronsson3, Aronsson4} as the Euler-Lagrange equations associated to the functional $$\|\nabla u\|_{L^{\infty}(\Omega)}.$$ Here, the weak solution $u$ to the $\infty$-Laplacian equation is understood in the viscosity sense. One can refer to, for instance, \cite{Aronsson1, Aronsson2, Aronsson3, Aronsson4, Aronsson5, ACJ, BDBM, BJW1, BJW2} for the limitation discussion as $p\rightarrow+\infty$. Moreover, $u$ is known as a local minimizer up to a Lipschitz extension, for which, one can refer to \cite{ACJ}. One can notice that the approximation process above is pointwise, and in \cite{BN,BM} one can find another approximation approach for variable $p(\cdot)$ based on $\Gamma$-convergence, which is also our concentration in this paper.

In this paper, we are concerned with the fractional case.

We study the Dirichlet problem and the minimizers of the functional
\begin{align}\label{FracFunc}
\int_{\Omega\times\Omega}\frac{|u(x)-u(y)|^p}{|x-y|^{\alpha p}}dxdy,
\end{align}
for $p\alpha>N$ ($N$ is the dimension of $\mathbb{R}^N$) with $\alpha\in(0,1)$, and $\Omega$ being a bounded domain in $\mathbb{R}^N$. For the fractional Sobolev semi-norm $W^{s,p}(\Omega)$ ($s\in(0,1)$) defined as
$$
[u]^p_{W^{s,p}(\Omega)}:=\int_{\Omega\times\Omega}\frac{|u(x)-u(y)|^p}{|x-y|^{N+sp}}dxdy,
$$
in the limit case as $p\rightarrow+\infty$, the fractional functional approximates to, formally,
\begin{align}\label{InftyFunc}
\|\frac{|u(x)-u(y)|}{|x-y|^{s}}\|_{L^{\infty}(\Omega\times\Omega)}.
\end{align}

In general, the Euler-Lagrange equations of the fractional functional (\ref{FracFunc}) is
\begin{align}\label{FracEL}
\mathcal{L}^{\alpha}_pu(x):=\int_{\Omega}|\frac{u(x)-u(y)}{|x-y|^{\alpha}}|^{p-1}\frac{sgn(u(x)-u(y))}{|x-y|^{\alpha}}dy=0\ in\ \Omega.
\end{align}

In viscosity sense, as $p\rightarrow+\infty$, the equation (\ref{FracEL}) should converge to the H$\ddot{o}$lder $\infty$-Laplacian equation (refer to \cite{CLM}), defined as
\begin{align}\label{InftyEL}
L^{\alpha}u=0\ in\ \Omega,
\end{align}
with the definition of operator $L^{\alpha}$
\begin{align}\label{HILapOpe}
L^{\alpha}u(x):=\sup\limits_{y\in\overline{\Omega},y\neq x}\frac{u(y)-u(x)}{|y-x|^{\alpha}}+\inf\limits_{y\in\overline{\Omega},y\neq x}
\frac{u(y)-u(x)}{|y-x|^{\alpha}}\ for\ x\in\Omega.
\end{align}
For the research on the Dirichlet problem of Euler-Lagrange equations of functional (\ref{FracFunc})
\begin{equation}\label{FracLapEqu}
\left\{
\begin{array}{lr}
\mathcal{L}^{\alpha}_pu(x)=f(x)&\ in\ \Omega,\\
u=g&\ on\ \partial\Omega.
\end{array}
\right.
\end{equation}
one can refer to \cite{CLM, Lindgren, LL}. We also want to mention that the boundary condition can be changed to the fully nonlocal case, that is $u=g\ on\ \mathbb{R}^N\setminus\Omega$, and then we would work on the space $\widetilde{W}^{s,p}_0(\Omega)$ defined as the complete closure of $C^{\infty}_0(\Omega)$ under the norm $W^{s,p}(\mathbb{R}^N)$. For the research in this direction, one can refer to \cite{DCKP, FP, Palatucci}, and a final generalization comment in \cite{CLM}.

For the Dirichlet problem of H$\ddot{o}$lder $\infty$-Laplacian equations, we denote
\begin{equation}\label{HILapEqu}
\left\{
\begin{aligned}
L^{\alpha}u=f\ in \ \Omega,\\
u=g\ on\ \partial\Omega.
\end{aligned}
\right.
\end{equation}
In \cite{CLM} one can see that under suitable conditions when $p\rightarrow+\infty$ is large enough, the weak solutions of Dirichlet problem of (\ref{FracLapEqu}) converge to the weak solutions of the equations (\ref{HILapEqu}) in the viscosity sense.
For the readers' convenience, we list the results below without proof.
\begin{Theorem}[\cite{CLM} Theorem 1.1, limit equation as $p\rightarrow+\infty$]
Let $\alpha\in(0,1]$ and if $\alpha = 1$ assume $N\geq2$. Consider a bounded Lipschitz domain $\Omega$ in $\mathbb{R}^N$, and
boundary data $g\in C^{0,\alpha}(\partial\Omega)$. For any $p>2N/\alpha$, there exists a unique minimizer $u_p$ of (\ref{FracFunc}) satisfying $u=g$ on $\partial\Omega$. Moreover, as $p\rightarrow+\infty$, we have $u_p\rightarrow u_{\infty}$
uniformly in $\overline{\Omega}$ and $u_{\infty}\in C^{0,\alpha}(\overline{\Omega})$ is a viscosity solution of (\ref{InftyEL}).
\end{Theorem}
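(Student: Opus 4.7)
My plan follows the classical variational-to-viscosity passage used to go from the local $p$-Laplacian to the $\infty$-Laplacian, adapted to the nonlocal fractional setting. Write $F_p(u)$ for the functional in (\ref{FracFunc}). Since $g\in C^{0,\alpha}(\partial\Omega)$, it admits a Hölder extension $G\in C^{0,\alpha}(\overline{\Omega})$ (via the McShane-type formula $G(x)=\inf_{z\in\partial\Omega}(g(z)+[g]_{C^{0,\alpha}}|x-z|^{\alpha})$), and $F_p(G)\leq [G]_{C^{0,\alpha}}^{p}|\Omega|^{2}<\infty$. The admissible class $\mathcal{A}=\{u\in L^{\infty}(\Omega):u=g\text{ on }\partial\Omega,\,F_p(u)<\infty\}$ is thus nonempty; strict convexity of $F_p$ for $p>1$ gives uniqueness, and existence follows from the direct method combined with the uniform Hölder bound derived next.

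\emph{Uniform Hölder bound as $p\to\infty$.} Minimality gives $F_p(u_p)^{1/p}\leq|\Omega|^{2/p}[G]_{C^{0,\alpha}}$. For every ball $B=B_r(x_0)\subset\Omega$,
\begin{align*}
\frac{1}{|B|^{2}}\int_{B}\int_{B}|u_p(x)-u_p(y)|^{p}\,dx\,dy\leq\frac{(2r)^{\alpha p}}{|B|^{2}}\int_{B}\int_{B}\frac{|u_p(x)-u_p(y)|^{p}}{|x-y|^{\alpha p}}\,dx\,dy\leq C\,r^{\alpha p-2N}F_p(u_p).
\end{align*}
Taking $p$-th roots shows the $L^p$-Campanato seminorm of $u_p$ at scale $r$ is at most $C\,r^{\alpha-2N/p}[G]_{C^{0,\alpha}}$; since $\alpha-2N/p>0$ precisely when $p>2N/\alpha$, the Meyers--Campanato characterization of Hölder spaces yields $[u_p]_{C^{0,\alpha-2N/p}(\overline{\Omega})}\leq C[G]_{C^{0,\alpha}}$ uniform in $p$ (the Lipschitz boundary and the pointwise match $u_p=g$ on $\partial\Omega$ allow one to propagate the estimate up to $\partial\Omega$). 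Arzelà--Ascoli then produces $u_{p_k}\to u_{\infty}$ uniformly on $\overline{\Omega}$ with $u_{\infty}|_{\partial\Omega}=g$; a standard lower-semicontinuity argument, comparing $\liminf F_{p_k}(u_{p_k})^{1/p_k}$ with the $L^{\infty}$ norm of the limit difference quotient on any set of positive measure where it is nearly maximal, upgrades the limit to $u_{\infty}\in C^{0,\alpha}(\overline{\Omega})$ with $[u_{\infty}]_{C^{0,\alpha}}\leq [G]_{C^{0,\alpha}}$.

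\emph{Viscosity identification and uniqueness of the limit.} The Euler--Lagrange equation reads $\mathcal{L}^{\alpha}_{p}u_p(x)=0$ as in (\ref{FracEL}). To verify that $u_{\infty}$ is a viscosity solution of (\ref{InftyEL}), let $\phi\in C^{2}(\overline{\Omega})$ touch $u_{\infty}$ strictly from above at $x_0\in\Omega$. Uniform convergence produces local maxima $x_{p_k}\to x_0$ of $u_{p_k}-\phi$; plugging $\phi$ into $\mathcal{L}^{\alpha}_{p_k}$ at $x_{p_k}$ and splitting into positive and negative parts of $\phi(x_{p_k})-\phi(y)$ yields
\begin{align*}
\Bigl(\int_{\Omega}\frac{[\phi(x_{p_k})-\phi(y)]_{+}^{p_k-1}}{|x_{p_k}-y|^{\alpha p_k}}\,dy\Bigr)^{\!\frac{1}{p_k-1}}\leq\Bigl(\int_{\Omega}\frac{[\phi(x_{p_k})-\phi(y)]_{-}^{p_k-1}}{|x_{p_k}-y|^{\alpha p_k}}\,dy\Bigr)^{\!\frac{1}{p_k-1}}.
\end{align*}
As $p_k\to\infty$ each side converges to the $L^{\infty}(\overline{\Omega})$ norm of the corresponding signed Hölder quotient $(\phi(x_0)-\phi(y))_{\pm}/|x_0-y|^{\alpha}$, giving the subsolution inequality $L^{\alpha}\phi(x_0)\leq0$ with $L^{\alpha}$ as in (\ref{HILapOpe}); the supersolution direction is symmetric. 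A comparison principle for $L^{\alpha}$ with continuous boundary datum $g$ then implies uniqueness of the viscosity solution, so the whole family $\{u_p\}$ (not just the subsequence) converges to $u_{\infty}$.

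\emph{Main obstacle.} The delicate step is the last passage: one must ensure that the positive and negative parts of $\phi(x_{p_k})-\phi(y)$, weighted by the strongly singular kernel $|x_{p_k}-y|^{-\alpha p_k}$, produce a clean $L^{\infty}$ limit without any cancellation artefact near the moving touching point $x_{p_k}$. This is why one passes to a strict touch from above by perturbing $\phi$ to $\phi+\epsilon|x-x_0|^{2}$, and why the uniform Hölder regularity of $u_p$ with exponent $\alpha-2N/p\to\alpha$ is crucial — it is precisely this uniform modulus that forces the maximum points to concentrate at $x_0$ and licenses the exchange of limit with $\sup$ and $\inf$ in (\ref{HILapOpe}).
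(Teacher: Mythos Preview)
The paper does not prove this theorem at all: it is quoted verbatim from \cite{CLM} and introduced with the sentence ``For the readers' convenience, we list the results below without proof.'' There is therefore no proof in the present paper to compare your proposal against; the authors rely entirely on the original argument of Chambolle, Lindgren and Monneau.

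That said, your outline follows the strategy of \cite{CLM} quite faithfully: McShane extension of $g$, uniform $C^{0,\alpha-2N/p}$ bound from the energy via a Morrey/Campanato-type embedding, Arzel\`a--Ascoli compactness, and then passage to the limit in the Euler--Lagrange relation to identify $u_\infty$ as a viscosity solution of $L^\alpha u=0$. One point that would need more care in an actual write-up is the viscosity step: in the nonlocal setting a local touching of $\phi$ from above does not by itself allow you to replace $u_{p_k}$ by $\phi$ inside the integral $\mathcal{L}^\alpha_{p_k}$, since the operator sees the function on all of $\Omega$; one either needs a global touching (after the strict perturbation you mention) or the hybrid test-function formulation, together with the fact that weak solutions of \eqref{FracEL} are viscosity solutions. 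You flag this correctly as the ``main obstacle,'' but the displayed inequality with $\phi$ substituted on both sides is not literally what the weak formulation gives you and requires an intermediate argument.
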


One can see that under suitable conditions the minimizers $u$ exhibit $\alpha$-H$\ddot{o}$lder continuity up to the boundary. So it is safe to assume that the boundary value $g|_{\partial\Omega}$ is $\alpha$-H$\ddot{o}$lder continuous when $p$ is large enough.

The first half of this paper is to investigate the convergence of fractional functional (\ref{FracFunc}) to the infinity functional (\ref{InftyFunc}) when $p\rightarrow+\infty$ in De Giorgi sense. Then based on this, we also investigate the compatibility of non-homogeneous Dirichlet problems during the process $p\rightarrow+\infty$ of the functional (\ref{FracFunc}).

We want seize the chance to mention the following implicit representation of viscosity solution to (\ref{HILapEqu}) when $f=0$. We just give the statement of the theorem, and for the proof details one can refer to \cite{CLM}.
\begin{Theorem}[\cite{CLM} Theorem 1.5, existence for general $\it{f}$, partial uniqueness]\label{ParUniqueness}
Let $\alpha\in(0,1]$, $\Omega$ be a bounded open domain, $g\in C(\partial\Omega)$ and $f\in C(\Omega)\cap L^{\infty}(\Omega)$.
\begin{itemize} \itemsep -2pt
    \item (Existence) Then there exists a viscosity solution $u\in C(\overline{\Omega})$ of (\ref{HILapEqu}).
    \item (Partial uniqueness) Assume $f=0$. Then the viscosity solution $u\in C(\overline{\Omega})$ of (\ref{HILapEqu}) is unique and is defined implicitly by the following:
    \begin{equation}\label{SolFormula}
    u(x)=\left\{
    \begin{array}{lr}
    g(x)& if\ x\in\partial\Omega,\\
    a\ with\ \ell_x(a)=0& if\ x\in\Omega,
    \end{array}
    \right.
    \end{equation}
    where
    $$
    \ell_x(a)=\sup\limits_{y\in\partial\Omega}\frac{g(y)-a}{|y-x|^{\alpha}}+
    \inf\limits_{y\in\partial\Omega}\frac{g(y)-a}{|y-x|^{\alpha}}.
    $$
\end{itemize}
\end{Theorem}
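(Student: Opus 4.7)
The plan is to split the theorem into an existence part (for general $f$) and a uniqueness/formula part (when $f=0$), prefaced by a short analysis of the scalar map $a \mapsto \ell_x(a)$. First I would note that for each fixed $x \in \Omega$ one has $d(x, \partial\Omega) > 0$, so the kernel $|y-x|^{-\alpha}$ is bounded between two positive constants as $y$ ranges over the compact set $\partial\Omega$. Hence $a \mapsto (g(y)-a)/|y-x|^\alpha$ is strictly decreasing and affine in $a$, and so $\ell_x$---the sum of a supremum and an infimum over $\partial\Omega$ of such affine maps---is continuous and strictly decreasing in $a$, with $\ell_x(a) \to \mp\infty$ as $a \to \pm\infty$. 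By the intermediate value theorem there exists a unique root $a^*(x)$, and the candidate $u(x) := a^*(x)$ in $\Omega$ (extended by $g$ on $\partial\Omega$) is well defined.

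For existence with general $f \in C(\Omega) \cap L^\infty(\Omega)$, I would apply Perron's method directly to the operator $L^\alpha$: construct H\"older cone barriers of the form $x \mapsto K \pm C|x-x_0|^\alpha$ to enforce boundary continuity at each $x_0 \in \partial\Omega$, verify the standard stability of viscosity sub/supersolutions under suprema and infima (which holds because $L^\alpha$ is an explicit nonlocal first-order operator given by an $\sup+\inf$ of difference quotients), and conclude that the Perron function is a viscosity solution continuous up to $\overline\Omega$. Alternatively, one could include a source term $-\int_\Omega f u$ in the fractional $p$-functional (\ref{FracFunc}), apply the variational scheme behind Theorem 2.1, and pass to the limit as $p \to \infty$; in the De Giorgi framework of the present paper this would yield a viscosity solution of $L^\alpha u = f$ via a $\Gamma$-convergence argument.

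For $f = 0$, I would then check that the function $u$ implicitly defined by $\ell_x(u(x)) = 0$ is continuous on $\overline\Omega$ with trace $g$ on $\partial\Omega$---interior continuity follows from joint continuity of $\ell_x(a)$ in $(x,a)$ together with strict monotonicity in $a$ (an implicit function type argument), and boundary continuity follows by squeezing the sup and inf terms as $x \to x_0 \in \partial\Omega$. Verifying that this $u$ solves $L^\alpha u = 0$ in the viscosity sense reduces to showing that the extrema appearing in $L^\alpha u(x)$ (taken over $y \in \overline\Omega$) coincide with the extrema in $\ell_x$ (taken over $y \in \partial\Omega$); this is the $\alpha$-H\"older analog of the comparison-with-cones characterization of $\infty$-harmonic functions, with the equation $\ell_x(u(x)) = 0$ expressing precisely the required balance between extremal H\"older cone slopes.

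The main obstacle is uniqueness. The nonlocal first-order character of $L^\alpha$ obstructs a naive doubling-of-variables argument from second-order viscosity theory; instead, one must use H\"older cone test functions directly. The target is a comparison principle stating that, for $f = 0$, if $u \leq v$ on $\partial\Omega$ for a viscosity subsolution $u$ and a supersolution $v$ of $L^\alpha w = 0$, then $u \leq v$ on $\overline\Omega$; applied twice to two solutions sharing the data $g$, this gives uniqueness, and hence any viscosity solution must coincide with the formula. Concretely, a tangency argument at an interior extremum of $v-u$ or direct cone comparison should force both extremal difference quotients of any solution at every interior $x$ to be attained on $\partial\Omega$, reducing $L^\alpha v(x) = 0$ to $\ell_x(v(x))=0$. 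This Aronsson--Crandall--Evans--Gariepy style argument, suitably adapted to the H\"older setting, is the expected key technical step.
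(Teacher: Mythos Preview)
The paper does not prove this theorem: it is quoted verbatim from \cite{CLM} (Theorem~1.5 there), and the author explicitly writes ``We just give the statement of the theorem, and for the proof details one can refer to \cite{CLM}.'' So there is no proof in the present paper to compare your proposal against.

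That said, your outline is a reasonable sketch of the strategy actually used in \cite{CLM}: the well-definedness of the implicit formula via strict monotonicity of $\ell_x$, Perron's method with H\"older cone barriers for existence, and a comparison-with-cones argument for uniqueness when $f=0$. The one caveat is that your alternative existence route via $\Gamma$-convergence of the $p$-functionals would, as stated in the paper and in \cite{CLM}, require $g \in C^{0,\alpha}(\partial\Omega)$ rather than merely $g \in C(\partial\Omega)$, so it does not cover the full generality of the theorem; the direct Perron argument is what is needed. But in any case, for the purposes of this paper the correct disposition of this statement is simply to cite \cite{CLM}, as the author does.
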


\subsection{Main Results}

In order to neatly present the subject, we first need some definitions. The natural setting for variational functional of the operator $\mathcal{L}^{\alpha}_p$ in the domain $\Omega\subseteq\mathbb{R}^N$ is the space $W^{s,p}_0(\Omega)$ with $s=\alpha-N/p$, $\alpha\in(0,1)$ and $p\alpha>N$, defined as the completion of $C^{\infty}_0(\Omega)$ with respect to the standard Gagliardo semi-norm
$$
[u]_{W^{s,p}(\Omega)}:=\left(\int_{\Omega\times\Omega}\frac{|u(x)-u(y)|^p}{|x-y|^{N+sp}}dxdy\right)^{\frac{1}{p}};
$$
if $p=\infty$, the semi-norms $W^{s,\infty}(\overline\Omega)$ and $W^{s,\infty}(\Omega)$ are respectively defined by
$$
[u]_{W^{s,\infty}(\overline\Omega)}:=\sup\limits_{x\neq y,x,y\in\overline\Omega}|\frac{u(x)-u(y)}{|x-y|^{s}}|,
$$
and
$$
[u]_{W^{s,\infty}(\Omega)}:=\sup\limits_{x\neq y,x,y\in\Omega}|\frac{u(x)-u(y)}{|x-y|^{s}}|.
$$

In all that follows, for $\alpha\in(0,1)$ and $q\alpha>N$, we define $E_{\alpha,p}:L^q(\Omega)\rightarrow[0,\infty]$ by
$$
E_{\alpha,p}(u)=
\left\{
\begin{array}{lr}
\left(\int_{\Omega\times\Omega}\frac{|u(x)-u(y)|^p}{|x-y|^{\alpha p}}dxdy\right)^{\frac{1}{p}} & if\ u\in W^{s,p}(\Omega)\ (s=\alpha-N/q),\\
\infty & otherwise.
\end{array}
\right.
$$
Define $\overline{E}_{\alpha,\infty}:L^q(\Omega)\rightarrow[0,\infty]$ by
$$
\overline{E}_{\alpha,\infty}(u)=
\left\{
\begin{array}{lr}
\sup\limits_{x\neq y,x,y\in\overline\Omega}|\frac{u(x)-u(y)}{|x-y|^{\alpha}}| & if\ u\in W^{s,\infty}(\overline\Omega)\ (s=\alpha),\\
\infty & otherwise;
\end{array}
\right.
$$
and $E_{\alpha,\infty}:L^q(\Omega)\rightarrow[0,\infty]$ by
$$
E_{\alpha,\infty}(u)=
\left\{
\begin{array}{lr}
\sup\limits_{x\neq y,x,y\in\Omega}|\frac{u(x)-u(y)}{|x-y|^{\alpha}}| & if\ u\in W^{s,\infty}(\Omega)\ (s=\alpha),\\
\infty & otherwise.
\end{array}
\right.
$$

The first result concerns the $\Gamma(L^q(\Omega))$-convergence of the functional
$$
E_{\alpha,q}
$$
to the $\alpha$-infinity functional
$$
E_{\alpha,\infty}\ \ and\ \ \overline{E}_{\alpha,\infty}
$$
respectively, as $u_p\rightarrow u$ in $L^q(\Omega)$ strongly for different suitable $q>\frac{N}{\alpha}$.

\begin{Theorem}[Asymptotic behaviour of $p\rightarrow+\infty$]\label{GammaConvinfty}
Let $\alpha\in(0,1)$ and $\Omega$ be a bounded Lipschitz domain in $\mathbb{R}^N$. We consider $\{p\}_p$ as a strictly increasing sequence going to $+\infty$. Then we have

(i) $\Gamma(L^q(\Omega))$-$\lim\limits_{p\rightarrow+\infty}E_{\alpha,p}=E_{\alpha,\infty}$ with some $q>\frac{N}{\alpha}$;

(ii) $\Gamma(L^q(\Omega))$-$\lim\limits_{p\rightarrow+\infty}E_{\alpha,p}=\overline{E}_{\alpha,\infty}$ with some $q>\frac{2N}{\alpha}$.
\end{Theorem}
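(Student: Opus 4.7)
The plan is to prove both parts by the standard two-step $\Gamma$-convergence pattern (a $\liminf$ inequality plus a recovery sequence), exploiting the fact that $E_{\alpha,p}(u)$ is literally an $L^p(\Omega\times\Omega)$ norm of the Hölder quotient $(x,y)\mapsto |u(x)-u(y)|/|x-y|^\alpha$; since $\Omega\times\Omega$ has finite measure, this $L^p$ norm tends to the corresponding essential supremum as $p\to\infty$, which is the natural candidate for the limit.

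For the $\liminf$ inequality, assume $u_p\to u$ in $L^q(\Omega)$ and, after extracting a subsequence, that $L:=\liminf_p E_{\alpha,p}(u_p)<\infty$. Fix $m$ with $\alpha m>N$. Hölder's inequality on $\Omega\times\Omega$ gives
$$E_{\alpha,m}(u_p)\leq |\Omega|^{2(1/m-1/p)}\,E_{\alpha,p}(u_p)\qquad(p\geq m).$$
Passing to an a.e.\ convergent subsequence of $u_p$ and applying Fatou's lemma to the Gagliardo integrand yields $E_{\alpha,m}(u)\leq|\Omega|^{2/m}L$. Letting $m\to\infty$ identifies the essential supremum of the Hölder quotient on $\Omega\times\Omega$ as bounded above by $L$. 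To upgrade this essential sup to the pointwise sup appearing in $E_{\alpha,\infty}$ and $\overline{E}_{\alpha,\infty}$, I would invoke the fractional Morrey embedding: for $p$ large enough that $(\alpha-N/p)p>N$, $W^{\alpha-N/p,p}(\Omega)\hookrightarrow C^{0,\alpha-2N/p}(\overline\Omega)$, using the Lipschitz regularity of $\partial\Omega$. In case (ii), the hypothesis $q>2N/\alpha$ provides a uniform Hölder bound on $u_p$ up to $\overline\Omega$; Arzelà--Ascoli combined with the $L^q$ convergence then yields uniform convergence of a subsequence of $u_p$ to a Hölder representative $u\in C^{0,\alpha}(\overline\Omega)$, so that essential sup equals pointwise sup over $\overline\Omega$, giving $\overline{E}_{\alpha,\infty}(u)\leq L$. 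Case (i) is analogous, the weaker threshold $q>N/\alpha$ matching the weaker pointwise sup taken only over $\Omega$.

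For the recovery sequence, I would take the constant sequence $u_p\equiv u$ in both cases. If the right-hand side is infinite, the inequality is trivial; otherwise, the pointwise bound $|u(x)-u(y)|/|x-y|^\alpha\leq E_{\alpha,\infty}(u)$ (resp.\ $\overline{E}_{\alpha,\infty}(u)$) integrated to the $p$-th power over $\Omega\times\Omega$ yields
$$E_{\alpha,p}(u)\leq|\Omega|^{2/p}\,E_{\alpha,\infty}(u)\longrightarrow E_{\alpha,\infty}(u),$$
and analogously in the closure case. The main obstacle is the essential-to-pointwise sup upgrade: it requires a uniform fractional Morrey estimate with Hölder exponent $\alpha-2N/p$ tending to $\alpha$ (rather than equalling the fixed target exponent), and it is precisely the Lipschitz regularity of $\partial\Omega$ together with the threshold $q>2N/\alpha$ that makes the boundary extension work and lets us identify the limit with $\overline{E}_{\alpha,\infty}$ in part (ii) rather than $E_{\alpha,\infty}$.
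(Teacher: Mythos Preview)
Your proposal is correct and follows the same overall architecture as the paper: the recovery sequence is the constant sequence $u_p\equiv u$ in both, and the $\liminf$ inequality proceeds by H\"older's inequality on $\Omega\times\Omega$ to compare $E_{\alpha,m}(u_p)$ with $E_{\alpha,p}(u_p)$ for fixed $m$, then sending $m\to\infty$.

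There are two minor mechanical differences. First, to pass the lower bound from $u_p$ to $u$, you use an a.e.\ convergent subsequence and Fatou's lemma on the Gagliardo integrand, whereas the paper extracts a weakly convergent subsequence in $W^{\alpha-N/q,q}(\Omega)$ and invokes weak lower semicontinuity of the seminorm; your route is slightly more elementary and avoids the Poincar\'e--Wirtinger step. Second, for the upgrade from essential to pointwise supremum in case~(ii), you argue uniform H\"older bounds on the sequence $u_p$ and apply Arzel\`a--Ascoli, whereas the paper applies the Morrey embedding once to the limit $u$ itself and then uses a direct $\varepsilon$-continuity argument to extend the supremum from $\Omega$ to $\overline\Omega$. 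Both approaches work and rely on the same ingredient (the fractional Morrey embedding on a Lipschitz domain); yours gives uniform convergence of a subsequence as a byproduct, the paper's is marginally shorter.
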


The proof of this theorem follows from Proposition \ref{limsup} and Proposition \ref{liminf} below.

\begin{Remark}
The reason why we utilize here $\Omega$ being a domain, not more general as an open bounded set in $\mathbb{R}^N$, is that we would use the compact imbedding theorem for fractional sobolev space $W^{s,p}(\Omega)$, which to our best knowledge, is valid only for domain (see \cite{DCKP,CLM}).
\end{Remark}

\begin{Remark}\label{Rem2}
For recent application of $\Gamma$-convergence in other situations of the fractional case, one can refer to \cite{ADPM,BPS,Ponce}. For a general introduction of $\Gamma$-convergence, one can refer to \cite{Braides, DalMaso}.
\end{Remark}

\begin{Remark}\footnote{This notification is attributed to Professor Terasawa.}
One can also find a similar result in \cite{BPLS}, which established a approximation to H\"{o}lder infinity Laplacian equation by $\Gamma$-convergence by Orlicz fractional Laplacians (see Theorem 5.2 therein).
\end{Remark}

We may apply the $\Gamma$-limit of "free" energy results above to minimum of the form
\begin{equation}\label{MiniEn}
m_{\epsilon}=\inf
\left\{
\int_{\Omega}f_{\epsilon}(x,u,D^su)dx-\int_{\Omega}\langle g,u\rangle dx:u=\varphi\ on\ \partial\Omega
\right\},
\end{equation}
during which $\Omega$ stands for a bounded (smooth enough) domain of $\mathbb{R}^N$ and $s\in(0,1)$, and $D^s$ denotes a fractional differential operator. 

Applications of $\Gamma$-convergence to PDEs can be generally related to the behavior of the Euler-Lagrange equations. Notice that the possibility of defining a $\Gamma$-limit related to these problems will not be linked to the properties (or even the existence) of the solutions of the related Euler-Lagrange equations (\cite{Braides}). For example, for fractional Laplacian equation $(-\Delta)^su=0$ in $\Omega$, the Dirichlet boundary problem $u=g$ on ${\partial\Omega}$ is ill-posedness, and the case $u=g$ on $\mathbb{R}^N\setminus\Omega$ is well-posedness, which means the boundary value is not only determined only in the domain $\Omega$, but the whole space (see \cite{GM,Warma}), but we can establish the existence and uniqueness of the minimizer for the fractional $p$-functional (\ref{FracFunc}) under the first Dirichlet condition. See also section \ref{svaryingprocess}.

So the uniqueness of the minimizer of the limitation energy does not imply corresponding uniqueness of the solutions to the limitation Euler-Lagrange equations (Thm.\ref{ParUniqueness}). In particular in this paper, we can only state that the minimizer sequence would convergence to a minimizer of the limitation functional, but there are also many other extensions and characterizations of the minimizer as the weak solutions to different Euler-Lagrange forms (see, e.g., \cite{Barron, CEG, CLM}).

\medskip

We can see that in the functional (\ref{MiniEn}) there exist two other terms: the force term $g$ and the boundary $\varphi$.
Anyway even if we have established the $\Gamma$-convergence for the functional $E_{\alpha,p}(u)$, we can only get immediately the same convergence result for the minimizers of such functionals in the same space, but not for the minimum problems with non-homogeneous Dirichlet boundary conditions. So we have to verify the compatibility of the condition $u=\varphi$ on $\partial\Omega$, which is our next main result in this paper.

For the preparation of  the investigation of the compatibility of the Dirichlet boundary conditions, we give some definitions first. Let $\Omega$ be a bounded Lipschitz domain in $\mathbb{R}^N$, $0<\alpha<1$ and $p>\frac{2N}{\alpha}$. Now with $\varphi\in C^{0,\alpha}(\Omega)$ we define some admissible function sets
$$
X^{\varphi}_{\alpha,p}(\Omega):=\{u:\left(\int_{\Omega\times\Omega}\frac{|u(x)-u(y)|^p}{|x-y|^{\alpha p}}dxdy\right)^{\frac{1}{p}}<+\infty,\ u=\varphi\ on\ \partial\Omega\},
$$
and
$$
X^{\varphi}_{\alpha,\infty}(\overline\Omega):=
\{u:\sup\limits_{x,y\in\overline{\Omega},x\neq y}\frac{|u(x)-u(y)|}{|x-y|^{\alpha}}<+\infty,\ u=\varphi\ on\ \partial\Omega\}.
$$

The energy integrals are defined as follows:
\begin{equation}\label{BoundaryP}
E^{\varphi}_{\alpha,p}(u)=
\left\{
\begin{array}{lr}
\left(\int_{\Omega\times\Omega}\frac{|u(x)-u(y)|^p}{|x-y|^{\alpha p}}dxdy\right)^{\frac{1}{p}},& if\ u\in X^{\varphi}_{\alpha,p}(\Omega),\\
\infty& otherwise;
\end{array}
\right.
\end{equation}
and
\begin{equation}\label{BoundaryInfty}
\overline{E}^{\varphi}_{\alpha,\infty}(u)=
\left\{
\begin{array}{lr}
\sup\limits_{x,y\in\overline{\Omega},x\neq y}\frac{|u(x)-u(y)|}{|x-y|^{\alpha}},& if\ u\in X^{\varphi}_{\alpha,\infty}(\overline\Omega),\\
\infty& otherwise.
\end{array}
\right.
\end{equation}

Since when $p$ is large enough, we have $W^{s,p}(\Omega)$ imbedded in $C^{0,s-\frac{N}{p}}(\overline\Omega)$ compactly, so functions in $W^{s,p}(\Omega)$ become continuous automatically up to the boundary. Then on the existence and uniqueness of minimizers for functionals $E^{\varphi}_{\alpha,p}(u_p)$ ($p>\frac{2N}{\alpha}$), one can refer to (\cite{CLM}, Lemma 6.3). For the completeness, we state the lemma here without proof.
\begin{Lemma}[\cite{CLM} Lemma 6.3, existence and uniqueness of minimizer]\label{CLMlemma6.3}
let $\alpha\in(0,1]$ and assume that $\Omega$ is a bounded Lipschitz domain. Consider $\varphi\in C^{0,\alpha}(\partial\Omega)$ and define the set
$$
X_{\varphi}(\Omega):=\{u\in C(\overline{\Omega}),\ u=\varphi\ on\ \partial\Omega\}.
$$
Define the minimization problem
$$
I=\inf\limits_{u\in X_{\varphi}(\Omega)}E_p(u),
$$
where
$$
E_p(u)=\int_{\Omega\times\Omega}|\frac{u(x)-u(y)}{|x-y|^{\alpha}}|^pdxdy.
$$
Then for any $p>\frac{2N}{\alpha}$, problem $I$ has a unique minimizer $u_p$. Moreover, for any function
$\phi\in C^{\infty}_c(\Omega)$, we have
$$
\begin{array}{rcl}
\int_{\Omega\times\Omega}|\frac{u_p(x)-u_p(y)}{|x-y|^{\alpha}}|^{p-1}
\left\{
\frac{sgn(u_p(y)-u_p(x))}{|y-x|^{\alpha}}
\right\}
(\phi(y)-\phi(x))dxdy=0.
\end{array}
$$
\end{Lemma}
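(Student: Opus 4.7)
The plan is the classical direct method in the calculus of variations, exploiting the compact H\"older embedding of fractional Sobolev spaces. First I would recognise that $\alpha p = N + sp$ with $s:=\alpha - N/p$, so $E_p(u) = [u]^p_{W^{s,p}(\Omega)}$, and the hypothesis $p > 2N/\alpha$ is equivalent to $s - N/p = \alpha - 2N/p > 0$, which is exactly the threshold for compact embedding of $W^{s,p}(\Omega)$ into a H\"older space on a Lipschitz domain. Extending $\varphi\in C^{0,\alpha}(\partial\Omega)$ to some $\tilde\varphi\in C^{0,\alpha}(\overline\Omega)$ (for instance via McShane-Whitney), one immediately has $E_p(\tilde\varphi) \le [\tilde\varphi]_{C^{0,\alpha}}^p|\Omega|^2 < +\infty$, so $I < +\infty$. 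For a minimizing sequence $\{u_n\}\subset X_\varphi(\Omega)$, the shifts $v_n := u_n - \tilde\varphi$ vanish on $\partial\Omega$ and, by the triangle inequality, have uniformly bounded $W^{s,p}$-seminorm; a fractional Poincar\'e-type inequality (valid because $v_n$ is continuous with $v_n|_{\partial\Omega}=0$) then yields a uniform $W^{s,p}(\Omega)$-bound on $\{u_n\}$. The compact embedding $W^{s,p}(\Omega)\hookrightarrow C^{0,\alpha-2N/p}(\overline\Omega)$ extracts a subsequence converging uniformly to some $u_\infty\in C(\overline\Omega)$; uniform convergence preserves the boundary value, so $u_\infty\in X_\varphi(\Omega)$, and Fatou's lemma applied to the pointwise limit of the nonnegative integrand gives $E_p(u_\infty)\le\liminf E_p(u_{n_k})=I$, establishing existence.

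For uniqueness, the strict convexity of $t\mapsto|t|^p$ with $p>1$ transfers to $E_p$: if distinct $u_1,u_2\in X_\varphi(\Omega)$ both achieve $I$, then either there is a positive-measure set in $\Omega\times\Omega$ on which $u_1(x)-u_1(y)\neq u_2(x)-u_2(y)$, in which case integrating the strict convexity inequality of $|\cdot|^p$ yields $E_p((u_1+u_2)/2)<I$, contradicting the definition of $I$; or $u_1-u_2$ is constant on $\Omega$, which together with the shared boundary datum $\varphi$ forces $u_1\equiv u_2$. For the variational identity, I would fix $\phi\in C^\infty_c(\Omega)$: then $u_p + t\phi\in X_\varphi(\Omega)$ for every $t\in\mathbb{R}$ (continuity and boundary value are both preserved), so $h(t):=E_p(u_p+t\phi)$ is minimized at $t=0$. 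Differentiation under the integral sign, legitimate by dominated convergence using the H\"older regularity of both $u_p$ (from the embedding above) and $\phi$ to produce an integrable majorant on a neighborhood of $t=0$, gives $h'(0)=0$, which after dividing by $p$ is exactly the stated weak formulation.

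The main obstacle will be the compactness step: one simultaneously needs a fractional Poincar\'e inequality adapted to vanishing boundary trace on a merely Lipschitz domain, and the compact H\"older embedding $W^{s,p}(\Omega)\hookrightarrow C^{0,s-N/p}(\overline\Omega)$. Both require $sp>N$ and $s>0$, and their combination forces $s-N/p=\alpha-2N/p>0$, i.e.\ $p>2N/\alpha$; this explains why the stated threshold is $2N/\alpha$ and not the weaker $N/\alpha$ that would suffice merely for $W^{s,p}(\Omega)\hookrightarrow C(\overline\Omega)$.
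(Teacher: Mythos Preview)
The paper does not prove this lemma at all: immediately before the statement it reads ``For the completeness, we state the lemma here without proof,'' deferring entirely to \cite{CLM}. There is therefore no proof in the paper to compare your proposal against.

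That said, your argument is the standard direct-method proof and is essentially what one finds in \cite{CLM}. One small comment: the step you call a ``fractional Poincar\'e-type inequality'' is more directly handled by the Morrey-type embedding itself. Since $sp>N$, the seminorm bound $[v_n]_{W^{s,p}(\Omega)}\le C$ already controls the H\"older seminorm $[v_n]_{C^{0,s-N/p}(\overline\Omega)}$; combined with $v_n|_{\partial\Omega}=0$ and boundedness of $\Omega$, this gives a uniform $L^\infty$ (hence $L^p$) bound on $v_n$ without invoking a separate Poincar\'e inequality. This is a cleaner route to the compactness you need and avoids any subtlety about whether continuous functions vanishing on $\partial\Omega$ lie in the right closure space.
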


Now we give another main result in this section:
\begin{Theorem}[Compatibility of Dirichlet boundary]\label{CompDirichlet}
Let $\alpha\in(0,1)$ and $q>\frac{2N}{\alpha}$. Let $\Omega$ be a bounded Lipschitz domain in $\mathbb{R}^N$ and $\varphi\in C^{0,\alpha}(\partial\Omega)$. Then we have\\
\indent(i) $\Gamma(L^q(\Omega))$-$\lim\limits_{p\rightarrow+\infty}E^{\varphi}_{\alpha,p}=\overline{E}^{\varphi}_{\alpha,\infty}$;\\
\indent(ii)\footnote{Many thanks to Professor Terasawa for pointing out the already existing reference on this result. See \cite{JLJ} Corollary 6.1.1, which is more general.} Let $\{u_p\}_p$ is the minimizer sequence of the functional sequence $\{E^{\varphi}_{\alpha,p}\}_p$. If $u_p\rightarrow u$ in $L^q(\Omega)$ strongly and
$$
\Gamma(L^q(\Omega))-\lim\limits_{p\rightarrow+\infty}E^{\varphi}_{\alpha,p}=\overline{E}^{\varphi}_{\alpha,\infty},
$$
then $u$ is a minimizer of $\overline{E}^{\varphi}_{\alpha,\infty}$ in $X^{\varphi}_{\alpha,\infty}(\overline\Omega)$.
\end{Theorem}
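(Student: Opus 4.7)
The strategy is to reduce (i) to verifying the two $\Gamma$-convergence inequalities while tracking the boundary trace $u=\varphi$ on $\partial\Omega$, and then to deduce (ii) by the standard minimality principle for $\Gamma$-convergent sequences together with the existence/uniqueness of minimizers provided by Lemma~\ref{CLMlemma6.3}.

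For the limsup inequality in (i), I would take the constant recovery sequence $u_p\equiv u$ whenever $u\in X^{\varphi}_{\alpha,\infty}(\overline\Omega)$ (and there is nothing to prove otherwise). The Hölder estimate $|u(x)-u(y)|\le\overline{E}^{\varphi}_{\alpha,\infty}(u)|x-y|^{\alpha}$ on $\overline\Omega\times\overline\Omega$ gives
\[
E^{\varphi}_{\alpha,p}(u)\le\overline{E}^{\varphi}_{\alpha,\infty}(u)\,|\Omega|^{2/p},
\]
and $|\Omega|^{2/p}\to 1$ as $p\to+\infty$. Moreover $u$ is admissible for every $E^{\varphi}_{\alpha,p}$ because the boundary condition is automatically satisfied and the right-hand side above is finite, and $u_p\equiv u$ trivially converges to $u$ in $L^q(\Omega)$.

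For the liminf inequality in (i), suppose $u_p\to u$ in $L^q(\Omega)$ with $\liminf_p E^{\varphi}_{\alpha,p}(u_p)<\infty$. Along a subsequence $u_p\in X^{\varphi}_{\alpha,p}(\Omega)$ with Gagliardo seminorm uniformly bounded by some $M$. The fractional Morrey embedding
\[
W^{\alpha-N/p,\,p}(\Omega)\hookrightarrow C^{0,\alpha-2N/p}(\overline\Omega),
\]
valid on Lipschitz $\Omega$ since $p\alpha>2N$, yields uniform Hölder bounds with exponent $\alpha-2N/p$, and the exponent tends to $\alpha$. An Arzelà–Ascoli argument combined with the strong $L^q$-convergence forces $u_p\to u$ uniformly on $\overline\Omega$; hence $u\in C^{0,\alpha}(\overline\Omega)$, $u=\varphi$ on $\partial\Omega$, and $u\in X^{\varphi}_{\alpha,\infty}(\overline\Omega)$. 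Because $u$ is continuous up to $\partial\Omega$, the suprema defining $E_{\alpha,\infty}(u)$ and $\overline{E}_{\alpha,\infty}(u)$ coincide on the limit, so Proposition~\ref{liminf} (i.e.\ the free-functional liminf that already underlies Theorem~\ref{GammaConvinfty}(ii)) upgrades directly to
\[
\overline{E}^{\varphi}_{\alpha,\infty}(u)=\overline{E}_{\alpha,\infty}(u)\le\liminf_{p\to+\infty}E_{\alpha,p}(u_p)=\liminf_{p\to+\infty}E^{\varphi}_{\alpha,p}(u_p).
\]

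For part (ii), I would apply the classical $\Gamma$-convergence minimality principle. Given any competitor $v\in X^{\varphi}_{\alpha,\infty}(\overline\Omega)$, pick a recovery sequence $v_p\to v$ in $L^q(\Omega)$ provided by (i). The minimality of $u_p$ (Lemma~\ref{CLMlemma6.3}) yields $E^{\varphi}_{\alpha,p}(u_p)\le E^{\varphi}_{\alpha,p}(v_p)$; coupling this with the liminf inequality applied to $u_p\to u$ and the limsup estimate on $v_p$ gives
\[
\overline{E}^{\varphi}_{\alpha,\infty}(u)\le\liminf_{p\to+\infty}E^{\varphi}_{\alpha,p}(u_p)\le\limsup_{p\to+\infty}E^{\varphi}_{\alpha,p}(v_p)\le\overline{E}^{\varphi}_{\alpha,\infty}(v),
\]
so $u$ minimizes $\overline{E}^{\varphi}_{\alpha,\infty}$ over $X^{\varphi}_{\alpha,\infty}(\overline\Omega)$.

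The main obstacle is the liminf step: one must promote a uniform Gagliardo bound (with $p\to+\infty$) into uniform continuity up to $\partial\Omega$, so that the pointwise constraint $u_p=\varphi$ on $\partial\Omega$ survives in the limit and that the supremum in the infinity functional can be taken over $\overline\Omega$. This is precisely where the hypothesis $q>2N/\alpha$ (rather than merely $q>N/\alpha$) is used, matching the stronger $\overline{E}_{\alpha,\infty}$ version of Theorem~\ref{GammaConvinfty}(ii); without this threshold, the boundary trace of the $L^q$-limit may not even be well-defined.
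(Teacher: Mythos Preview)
Your proposal is correct and follows the same overall architecture as the paper: constant recovery sequence $u_p\equiv u$ for the limsup, Proposition~\ref{liminf} for the liminf once the limit is shown to be admissible, and the standard $\Gamma$-convergence minimality argument for~(ii). The only difference lies in how you verify $u\in X^{\varphi}_{\alpha,\infty}(\overline\Omega)$ in the liminf step. You apply Morrey at the moving exponent~$p$ and Arzel\`a--Ascoli to upgrade $L^q$-convergence to uniform convergence on~$\overline\Omega$, so the trace condition $u_p|_{\partial\Omega}=\varphi$ passes to the limit directly; the paper instead fixes~$q$, uses H\"older's inequality to obtain a uniform $W^{\alpha-N/q,q}(\Omega)$ bound, argues that $X^{\varphi}_{\alpha,q}(\Omega)$ is weakly closed, and only then invokes the embedding into $C(\overline\Omega)$ at that fixed~$q$. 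Your route is a bit more transparent about why the boundary condition survives, but it tacitly requires the Morrey constant to be uniform in~$p$ (this is true and is exactly what \cite{CLM} uses, so it is not a gap); the paper's fixed-$q$ reduction sidesteps that technicality entirely.
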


\subsection{Proof of Theorem \ref{GammaConvinfty}}

\begin{Proposition}[$\Gamma-\limsup$ inequality]\label{limsup}
Let $\alpha\in(0,1)$ and $q>\frac{N}{\alpha}$, and let $u\in L^q(\Omega)$. Let $\{p\}$ be a sequence of strictly increasing positive numbers going to $+\infty$. Then there exists a sequence $\{u_p\}_p$ converging to $u$ in $L^q(\Omega)$ such that
$$
\limsup_{p\rightarrow+\infty}E_{\alpha,p}(u_p)\leq E_{\alpha,\infty}(u)\leq \overline{E}_{\alpha,\infty}(u).
$$
\end{Proposition}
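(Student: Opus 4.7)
My plan is to take the constant recovery sequence $u_p \equiv u$. The second inequality $E_{\alpha,\infty}(u) \leq \overline{E}_{\alpha,\infty}(u)$ is free: the supremum defining $E_{\alpha,\infty}$ is taken over the smaller set $\Omega \times \Omega \setminus \{x=y\}$, so it is dominated by the supremum over $\overline{\Omega} \times \overline{\Omega} \setminus \{x=y\}$. Thus the content of the proposition is the first inequality.

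First I would dispose of the trivial case $E_{\alpha,\infty}(u) = +\infty$: any sequence $u_p \to u$ in $L^q(\Omega)$ works because the right-hand side is infinite, so we can just set $u_p \equiv u$ and the convergence in $L^q(\Omega)$ is automatic. Then assume $E_{\alpha,\infty}(u) = M < +\infty$, so that $u \in W^{\alpha,\infty}(\Omega)$ (in particular $u$ is $\alpha$-Hölder continuous on $\Omega$, hence in $L^\infty(\Omega) \subset L^q(\Omega)$ by boundedness of $\Omega$). Again take $u_p \equiv u$, so that $u_p \to u$ in $L^q(\Omega)$ trivially.

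The core estimate is then the passage from the $L^\infty$-type seminorm to the $L^p$-type seminorm on a bounded product domain. By definition of $M$, we have the pointwise bound
\[
\frac{|u(x)-u(y)|^p}{|x-y|^{\alpha p}} \leq M^p \qquad \text{for a.e. } (x,y) \in \Omega \times \Omega.
\]
Integrating over $\Omega \times \Omega$, taking the $p$-th root, and recalling that $\Omega$ is bounded in $\mathbb{R}^N$, I get
\[
E_{\alpha,p}(u) \leq M \, |\Omega|^{2/p}.
\]
Since $|\Omega|^{2/p} \to 1$ as $p \to +\infty$, this yields $\limsup_p E_{\alpha,p}(u_p) \leq M = E_{\alpha,\infty}(u)$, which is the required $\Gamma$-$\limsup$ estimate.

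There is no genuine obstacle: the bounded-domain structure converts the $L^\infty$ supremum into an $L^p$ bound up to a factor $|\Omega|^{2/p}$ that trivializes in the limit. The only subtlety worth flagging is that the constant recovery sequence suffices precisely because $L^q(\Omega)$-convergence is the ambient topology in which the $\Gamma$-limit is computed; no mollification or extension argument is needed here, unlike in the companion $\Gamma$-$\liminf$ inequality (Proposition \ref{liminf}) where the compact embedding of $W^{s,p}$ into Hölder spaces must be exploited to upgrade the $L^q$ limit to a uniform limit.
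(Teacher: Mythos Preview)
Your proof is correct and follows essentially the same approach as the paper: take the constant recovery sequence $u_p\equiv u$, bound the integrand pointwise by the $\alpha$-H\"older quotient supremum, integrate over the bounded product $\Omega\times\Omega$, and let the resulting factor $|\Omega|^{2/p}\to 1$. The only cosmetic difference is that you split the trivial case at $E_{\alpha,\infty}(u)=+\infty$ rather than at $\overline{E}_{\alpha,\infty}(u)=+\infty$ as the paper does, which is slightly sharper but immaterial since the second inequality is free.
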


\begin{proof}
If $\overline{E}_{\alpha,\infty}(u)=+\infty$, the inequality is satisfied automatically, so there is noting to prove. Thus let us take $\overline{E}_{\alpha,\infty}(u)<+\infty$.

Now we will find a "recovery sequence" to verify the condition (ii) of the $\Gamma$-convergence equality.
Let us consider the sequence $\{u_p\}_p\subset L^q(\Omega)$, where $u_p:=u$ for all $p\geq 1$. Then we have
$$
\begin{array}{lcl}
\limsup\limits_{p\rightarrow+\infty}(\int_{\Omega\times\Omega}\frac{|u(x)-u(y)|^p}{|x-y|^{\alpha p}}dxdy)^{\frac{1}{p}}\\
\leq\lim\limits_{p\rightarrow+\infty}
\left(\int_{\Omega\times\Omega}(\sup\limits_{x\neq y,x,y\in\Omega}\frac{|u(x)-u(y)|}{|x-y|^{\alpha}})^pdxdy\right)^{\frac{1}{p}},
\end{array}
$$
and then
$$
\begin{array}{rcl}
\limsup\limits_{p\rightarrow+\infty}(\int_{\Omega\times\Omega}\frac{|u(x)-u(y)|^p}{|x-y|^{\alpha p}}dxdy)^{\frac{1}{p}}
&\leq&\sup\limits_{x\neq y,x,y\in\Omega}\frac{|u(x)-u(y)|}{|x-y|^{\alpha}}=E_{\alpha,\infty}(u)\\
&\leq&\sup\limits_{x\neq y,x,y\in\overline{\Omega}}|\frac{u(x)-u(y)}{|x-y|^{\alpha}}|=\overline{E}_{\alpha,\infty}(u),
\end{array}
$$
which concludes the desired result.
\end{proof}

\medskip

Now we attempt to verify the condition (i) in the Definition (\ref{GammaConvDef}).

In this paper, we use $\mathcal{L}^N(U)$ to denote the $N$-dimensional Lebesgue measure of the measurable set $U\subset\mathbb{R}^N$.

\begin{Proposition}[$\Gamma-\liminf$ inequality]\label{liminf}
Let $\alpha\in(0,1)$, $q>\frac{N}{\alpha}$, $u\in L^q(\Omega)$, and let $\{p\}$ be a sequence of strictly increasing positive numbers going to $+\infty$. Consider any sequence $\{u_p\}_p\subset L^q(\Omega)$ converging to $u$ in $L^q(\Omega)$, then we have
$$
E_{\alpha,\infty}(u)\leq\liminf_{p\rightarrow+\infty}E_{\alpha,p}(u_p).
$$
And if $q>\frac{2N}{\alpha}$, we have
$$
\overline{E}_{\alpha,\infty}(u)\leq\liminf_{p\rightarrow+\infty}E_{\alpha,p}(u_p).
$$
\end{Proposition}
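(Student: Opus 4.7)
The plan is to bridge the varying-exponent inequality to a fixed-exponent one by H\"older's inequality on the finite measure space $\Omega\times\Omega$, apply Fatou's lemma to obtain lower semicontinuity at the fixed exponent, and then drive the intermediate exponent to infinity via the classical $\|\cdot\|_{L^r}\to\|\cdot\|_{L^\infty}$ convergence on finite measure. Writing $F_p(x,y):=|u_p(x)-u_p(y)|/|x-y|^{\alpha}$ and $F(x,y):=|u(x)-u(y)|/|x-y|^{\alpha}$, note that $E_{\alpha,p}(u_p)=\|F_p\|_{L^p(\Omega\times\Omega)}$. First I would set $M:=\liminf_{p\to+\infty}E_{\alpha,p}(u_p)$ and assume $M<+\infty$ (otherwise nothing to prove), extract a subsequence along which $E_{\alpha,p}(u_p)$ is finite and converges to $M$, and pass to a further subsequence so that $u_p\to u$ a.e.\ in $\Omega$ (from the $L^q$-convergence hypothesis), which in turn yields $F_p\to F$ a.e.\ on $\Omega\times\Omega$.

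Next, for any fixed intermediate exponent $r$ with $2N/\alpha<r<p$, H\"older's inequality on the finite measure space $\Omega\times\Omega$ gives
$$
E_{\alpha,r}(u_p)=\|F_p\|_{L^r(\Omega\times\Omega)}\le\|F_p\|_{L^p(\Omega\times\Omega)}\,|\Omega|^{2(1/r-1/p)}=|\Omega|^{2(1/r-1/p)}E_{\alpha,p}(u_p).
$$
Combining the pointwise convergence with Fatou's lemma and the limit $|\Omega|^{2(1/r-1/p)}\to|\Omega|^{2/r}$, after letting $p\to+\infty$ I obtain
$$
E_{\alpha,r}(u)\le\liminf_{p\to+\infty}E_{\alpha,r}(u_p)\le|\Omega|^{2/r}\,M.
$$
Because $r>2N/\alpha$, so that $sr=(\alpha-N/r)r>N$, the fractional Sobolev embedding on the Lipschitz domain $\Omega$ places the continuous representative of $u$ in $C^{0,\alpha-2N/r}(\overline\Omega)$; thus $F$ extends continuously to $(\overline\Omega\times\overline\Omega)\setminus\{x=y\}$.

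Finally, letting $r\to+\infty$ and invoking the standard fact that $\|F\|_{L^r(\Omega\times\Omega)}\to\|F\|_{L^\infty(\Omega\times\Omega)}$ on a finite measure space (with $|\Omega|^{2/r}\to1$), I conclude $\|F\|_{L^\infty(\Omega\times\Omega)}\le M$. The continuity of $F$ off the diagonal identifies its essential supremum on $\Omega\times\Omega$ with the pointwise supremum appearing in the definition of $E_{\alpha,\infty}(u)$, which proves (i). Under the stronger hypothesis $q>2N/\alpha$ for (ii), the boundary-continuous extension is already available at the base exponent, so density and continuity let one replace the supremum over $\Omega\times\Omega$ by the one over $\overline\Omega\times\overline\Omega$, yielding $\overline{E}_{\alpha,\infty}(u)\le M$. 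The delicate step of the proof is precisely this bridge from the essential supremum on the open set $\Omega\times\Omega$ to the pointwise supremum used in the definitions of $E_{\alpha,\infty}$ and $\overline{E}_{\alpha,\infty}$; choosing $r>2N/\alpha$ before sending $r\to+\infty$ is what secures this through the fractional Sobolev boundary regularity, and the distinction between the hypotheses $q>N/\alpha$ versus $q>2N/\alpha$ reflects whether the boundary continuity must be manufactured inside the argument (for (i)) or is already built in at the base exponent (for (ii)).
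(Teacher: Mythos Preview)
Your argument is correct and shares the paper's skeleton: reduce to a finite liminf $M$, use H\"older on $\Omega\times\Omega$ to pass from the moving exponent $p$ to a fixed intermediate exponent, obtain lower semicontinuity at that exponent, and then send the intermediate exponent to infinity. The genuine difference is in the lower-semicontinuity step. The paper argues via weak compactness: it shows $\{u_p\}$ is bounded in $W^{\alpha-N/q,q}(\Omega)$, extracts a weakly convergent subsequence (identifying the limit as $u$ through the strong $L^q$ convergence and Poincar\'e--Wirtinger), and invokes weak lower semicontinuity of the Gagliardo seminorm. You instead pass to a subsequence with $u_p\to u$ a.e., hence $F_p\to F$ a.e.\ on $\Omega\times\Omega$, and apply Fatou directly. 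Your route is more elementary---no reflexivity or weak compactness needed---and has the incidental benefit of keeping the auxiliary exponent $r$ separate from the ambient $q$, which the paper's proof conflates when it writes ``letting $q\to+\infty$''. One small remark: your closing comment on the role of the hypothesis $q>2N/\alpha$ is slightly off, since in your argument the continuity of $u$ up to $\partial\Omega$ already follows from the Sobolev embedding at the auxiliary level $r>2N/\alpha$, independently of $q$; so your proof in fact delivers $\overline{E}_{\alpha,\infty}(u)\le M$ under the weaker assumption $q>N/\alpha$ as well. This does not affect correctness---if anything, your argument is a bit stronger than you state.
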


\begin{proof}
{\bf Step 1.}
If $\liminf\limits_{p\rightarrow+\infty}E_{\alpha,p}(u_p)=\infty$, the inequality is satisfied automatically, so there is nothing to prove.

Then let us suppose that $\liminf\limits_{p\rightarrow+\infty}E_{\alpha,p}(u_p)<\infty$. Then we can infer that there exists $L>0$ such that $\liminf\limits_{p\rightarrow+\infty}E_{\alpha,p}(u_p)$ is uniformly bounded, i.e.
$$
\liminf_{p\rightarrow +\infty}E_{\alpha,p}(u_p)\leq L,
$$
and based on a subsequence $\{u_{p_n}\}$ of $\{u_p\}$ we have
$$
\lim_{n\rightarrow +\infty}E_{\alpha,p_n}(u_{p_n})=\liminf_{p\rightarrow +\infty}E_{\alpha,p}(u_p)\leq L,
$$

For convenience we still denote the sequence $\{p_n\}$ by $\{p\}$.

Since $p\rightarrow+\infty$, then for $p$ large enough there holds $p>q$. Then by H\"{o}lder inequality, we have that
$$
\begin{array}{ccc}
\left(\int_{\Omega\times\Omega}\frac{|u_p(x)-u_p(y)|^q}{|x-y|^{\alpha q}}dxdy\right)^{1/q}\leq
C(\Omega,N)\left(\int_{\Omega\times\Omega}\frac{|u_p(x)-u_p(y)|^p}{|x-y|^{\alpha p}}dxdy\right)^{1/p}\\
\leq C(\Omega,N) L
\end{array}
$$

As $u_{p}\rightarrow u$ strongly in $L^q(\Omega)$ for every $p$, in view of Poincar\'{e}-Wirtinger inequality, we have that $u_{p}$ is uniformly bounded in $W^{\alpha-\frac{N}{q},q}(\Omega)$. Then we can extract a subsequence $u_{p}$ (not relabelled) such that
$$
u_{p}\rightharpoonup u
$$
weakly in $W^{\alpha-\frac{N}{q},q}(\Omega)$. Then by the lower semi-continuity of $W^{\alpha-\frac{N}{q},q}(\Omega)$ and H\"{o}lder inequality we have
$$
\begin{array}{rcl}
\int_{\Omega\times\Omega}|\frac{u(x)-u(y)}{|x-y|^{\alpha}}|^qdxdy&\leq&
\lim\limits_{n\rightarrow+\infty}\int_{\Omega\times\Omega}|\frac{u_{p}(x)-u_{p}(y)}{|x-y|^{\alpha}}|^qdxdy\\
&\leq&\lim\limits_{n\rightarrow+\infty}\mathcal{L}^N(\Omega)^{2}
\left(\int_{\Omega\times\Omega}|\frac{u_{p}(x)-u_{p}(y)}{|x-y|^{\alpha}}|^{p}dxdy\right)^{\frac{q}{p}}.
\end{array}
$$
This means
$$
\left(\int_{\Omega\times\Omega}|\frac{u(x)-u(y)}{|x-y|^{\alpha}}|^qdxdy\right)^{\frac{1}{q}}
\leq\mathcal{L}^N(\Omega)^{2/q}\lim\limits_{n\rightarrow+\infty}
\left(\int_{\Omega\times\Omega}|\frac{u(x)-u(y)}{|x-y|^{\alpha}}|^pdxdy\right)^{\frac{1}{p}}.
$$
Therefore letting $q\rightarrow+\infty$ we obtain
$$
E_{\alpha,\infty}(u)\leq\lim\limits_{p\rightarrow+\infty}E_{\alpha,p}(u_p),
$$
which implies for the original sequence $\{p\}$
$$
E_{\alpha,\infty}(u)\leq\liminf\limits_{p\rightarrow+\infty}E_{\alpha,p}(u_p).
$$

Then if $q>2N/\alpha$, then by Sobolev compact imbedding theorem, we have
$$
\|u\|_{C^{0,\gamma}(\overline{\Omega})}\leq C\|u\|_{W^{\alpha-\frac{N}{q},q}(\Omega)},
$$
where $\gamma=\alpha-\frac{2N}{q}$, and
$$
C^{0,\gamma}(\overline{\Omega}):=\{f\in C(\overline{\Omega}),\ \|f\|_{L^{\infty}(\overline{\Omega})}+\sup\limits_{x\neq y,x,y\in\overline\Omega}\frac{|f(x)-f(y)|}{|x-y|^{\gamma}}<+\infty\}.
$$
So $u$ is continuous up to the boundary, then for every boundary point $x_0$ on $\partial\Omega$, we can find a sequence $\{x_m\}\subset\Omega$ ($m\in\mathbb{N}$) such that $\lim\limits_{m\rightarrow+\infty}|x_m-x_0|=0$ and $|u(x_m)-u(x_0)|<\epsilon$ for $\forall$ $\epsilon>0$ when $m$ is large enough. Then for any $y\in\Omega$
$$
\begin{array}{llll}
\frac{|u(x_0)-u(y)|}{|x_0-y|^{\alpha}}&=&\frac{|u(x_0)-u(x_m)+u(x_m)-u(y)|}{|x_0-y|^{\alpha}}\\
&\leq&\frac{|u(x_0)-u(x_m)|}{|x_0-y|^{\alpha}}+\frac{|u(x_m)-u(y)|}{|x_0-y|^{\alpha}}\\
&\leq&\epsilon+\frac{|u(x_m)-u(y)|}{|x_0-y|^{\alpha}}\\
&\leq&\sup\limits_{x\neq y,x,y\in\Omega}|\frac{u(x)-u(y)}{|x-y|^{\alpha}}|+\epsilon.
\end{array}
$$
Since $\epsilon$ is arbitrary, we conclude that
$$
\sup\limits_{x\neq y,x,y\in\overline{\Omega}}|\frac{u(x)-u(y)}{|x-y|^{\alpha}}|\leq
\sup\limits_{x\neq y,x,y\in\Omega}|\frac{u(x)-u(y)}{|x-y|^{\alpha}}|,
$$
and obviously
$$
\sup\limits_{x\neq y,x,y\in\Omega}|\frac{u(x)-u(y)}{|x-y|^{\alpha}}|\leq
\sup\limits_{x\neq y,x,y\in\overline{\Omega}}|\frac{u(x)-u(y)}{|x-y|^{\alpha}}|,
$$
which concludes the desired result
$$
\overline{E}_{\alpha,\infty}(u)\leq\liminf_{p\rightarrow+\infty}E_{\alpha,p}(u_p).
$$

\end{proof}

\subsection{Proof of Theorem \ref{CompDirichlet}}
In this section, based on the $\Gamma$-convergence results established in Theorem \ref{GammaConvinfty}, we verify the compatibility of non-homogeneous of Dirichlet condition for the fractional Laplacian functional (\ref{FracFunc}) as $p\rightarrow+\infty$.

\begin{proof}
{\bf Step 1.} Now we firstly verify the $\liminf$ inequality in the definition of $\Gamma$-convergence for
$$
\Gamma(L^q(\Omega))-\lim\limits_{p\rightarrow+\infty}E^{\varphi}_{\alpha,p}=E^{\varphi}_{\alpha,\infty}.
$$
So let $u_p\rightarrow u$ in $L^q(\Omega)$. Then if
$$
\liminf\limits_{p\rightarrow+\infty}E^{\varphi}_{\alpha,p}(u_p)=+\infty,
$$
there is nothing to prove. So we may directly assume that for a sequence $\{u_p\}\subset L^q(\Omega)$ such that
$$
\liminf\limits_{p\rightarrow+\infty}E^{\varphi}_{\alpha,p}(u_p)<+\infty.
$$
Then we can extract a subsequence (not relabelled) $\{u_p\}$ and there exists some $L>0$ such that
$$
\lim\limits_{p\rightarrow+\infty}E^{\varphi}_{\alpha,p}(u_p)<L,
$$
which implies that sequence $\{u_p\}$ is uniformly bounded in $W^{\alpha-\frac{N}{q},q}(\Omega)$ by Sobolev imbedding theorem.

Then as in the step 1 of proof to Proposition \ref{liminf}, since $u_p\rightarrow u$ in $L^q(\Omega)$ strongly, by Poincar\'{e}-Wirtinger inequality, we have $u_p\rightharpoonup u$ weakly in $W^{\alpha-\frac{N}{q},q}(\Omega)$. Since for any $p\in(q,+\infty)$, $u_p\in X^{\varphi}_{\alpha,p}(\Omega)$ and by Sobolev imbedding theorem $X^{\varphi}_{\alpha,p}(\Omega)\hookrightarrow X^{\varphi}_{\alpha,q}$, we infer that $\{u_p\}\subset X^{\varphi}_{\alpha,q}$ for $p>q$. Since $X^{\varphi}_{\alpha,q}$ is closed in $W^{\alpha-\frac{N}{q},q}(\Omega)$, then by the reflexivity of $W^{\alpha-\frac{N}{q},q}(\Omega)$, we get that $u\in X^{\varphi}_{\alpha,q}(\Omega)$. Then following the same process as step 2 in the proof of Proposition \ref{liminf}, we have that for any sequence $u_p\rightarrow u$ in $L^q(\Omega)$
$$
\overline{E}^{\varphi}_{\alpha,\infty}(u)\leq\liminf\limits_{p\rightarrow+\infty}E^{\varphi}_{\alpha,p}(u_p).
$$
The only difference from Proposition \ref{liminf} is that the function space $X^{\varphi}_{\alpha,q}(\Omega)$: since $q>\frac{2N}{\alpha}$, by Sobolev imbedding theorem $X^{\varphi}_{\alpha,q}(\Omega)\hookrightarrow C(\overline\Omega)$, so we can directly get the estimates for $\overline{E}^{\varphi}_{\alpha,\infty}(u)$.

{\bf Step 2.} Now we are in the position to verify the recovery sequence condition in the $\Gamma$-convergence definition for
$$
\Gamma(L^q(\Omega))-\lim\limits_{p\rightarrow+\infty}E^{\varphi}_{\alpha,p}=\overline{E}^{\varphi}_{\alpha,\infty}.
$$
that is, to find a sequence $\{u_p\}\subset X^{\varphi}_{\alpha,p}(\Omega)$ such that for any $u\in L^q(\Omega)$
\begin{align}\label{supineq}
\begin{array}{rcl}
\overline{E}^{\varphi}_{\alpha,\infty}(u)\geq\limsup\limits_{p\rightarrow+\infty}E^{\varphi}_{\alpha,p}(u_p),
\end{array}
\end{align}
and
$$
u_p\rightarrow u\ in\ L^q(\Omega).
$$

In fact, as in Proposition \ref{limsup}, we can directly let $u_p:=u$ in $X^{\varphi}_{\alpha,p}(\Omega)$. Indeed, as $u\in X^{\varphi}_{\alpha,\infty}(\overline\Omega)$, then $u\in X_{\varphi}(\Omega)$ (defined in Lemma \ref{CLMlemma6.3}), and by Sobolev imbedding theorem we infer that $E_{\alpha,p}(u)$ is bounded. Because $u\in X^{\varphi}_{\alpha,\infty}(\overline\Omega)$, which is up to the boundary, and when $p>\frac{2N}{\alpha}$, $u\in C^{0,\alpha-\frac{2N}{p}}(\overline\Omega)$, we can infer that $u\in X^{\varphi}_{\alpha,p}(\Omega)$. Then
$$
\begin{array}{llll}
\left\{
\limsup\limits_{p\rightarrow+\infty}\int_{\Omega\time\Omega}\frac{|u(x)-u(y)|^p}{|x-y|^{\alpha p}}dxdy
\right\}^{1/p}&\leq&\limsup\limits_{p\rightarrow+\infty}
(\mathcal{L}^N(\Omega))^{\frac{2}{p}}\left\{[u]^p_{W^{\alpha,\infty}(\overline\Omega)}\right\}^{1/p}\\
&=&[u]_{W^{\alpha,\infty}(\overline\Omega)},
\end{array}
$$
which concludes the results together with step 1.

{\bf Step 3.} Next we prove (ii) of Theorem \ref{CompDirichlet}. We claim that $u\in X^{\varphi}_{\alpha,\infty}(\overline\Omega)$ is a minimizer of functional $\overline{E}^{\varphi}_{\alpha,\infty}$ given any $v\in X^{\varphi}_{\alpha,\infty}(\overline\Omega)$.

Supposing that the sequence $\{E^{\varphi}_{\alpha,p}\}_p$ $\Gamma(L^q(\Omega))$-converges to $\overline{E}^{\varphi}_{\alpha,\infty}$ at $v$, then by the definition of $\Gamma$-convergence, there exists a sequence $\{\omega_p\}_p$ such that
$$
\omega_p\rightarrow v\ in\ L^q(\Omega),\ as\ p\rightarrow+\infty,
$$
and
\begin{align}\label{sequencev}
\limsup\limits_{p\rightarrow+\infty}E^{\varphi}_{\alpha,p}(\omega_p)\leq E^{\varphi}_{\alpha,\infty}(v).
\end{align}
Since by assumption the sequence $\{u_p\}_p$ are the minimizers of $E^{\varphi}_{\alpha,p}$ in $X_{\varphi}(\Omega)$ for corresponding $p$, we infer that
$$
E^{\varphi}_{\alpha,p}(u_p)\leq E^{\varphi}_{\alpha,p}(\omega_p).
$$
Thus we have
\begin{align}\label{upvsomegap}
\liminf\limits_{p\rightarrow+\infty}E^{\varphi}_{\alpha,p}(u_p)\leq
\limsup\limits_{p\rightarrow+\infty}E^{\varphi}_{\alpha,p}(u_p)\leq
\limsup\limits_{p\rightarrow+\infty}\overline{E}^{\varphi}_{\alpha,\infty}(\omega_p).
\end{align}
So combining (\ref{sequencev}) and (\ref{upvsomegap}) yields that
$$
\overline{E}^{\varphi}_{\alpha,\infty}(u)\leq\overline{E}^{\varphi}_{\alpha,\infty}(v),
$$
which concludes the proof.
\end{proof}

\section{Asymptotic Behaviour on Varying $s$}\label{svaryingprocess}

Let $0<s<1$, $p\in(1,+\infty)$, and $\Omega\subset\mathbb{R}^N$ be an open bounded set. We consider the nonlocal nonlinear operator $(-\Delta_p)^su$ interpreted as
$$
(-\Delta_p)^su(x):=2\lim_{\epsilon\searrow0}
\int_{\mathbb{R}^N\setminus B_{\epsilon}(x)}\frac{|u(x)-u(y)|^{p-2}(u(x)-u(y))}{|x-y|^{N+sp}}dy,\ x\in\mathbb{R}^N.
$$
For more information on this operator we refer the reader to \cite{ADPM,BLP,BPS,DCKP,FP,LL,Palatucci}.

Firstly, we give a glimpse of the operator $(-\Delta_p)^s$ acting on the space $W^{s,p}_0(\Omega)$, which is defined as the closure of $C^{\infty}_0(\Omega)$ under the semi-norm
$$
[u]_{W^{s,p}(\Omega)}=\int_{\Omega\times\Omega}\frac{|u(x)-u(y)|^p}{|x-y|^{N+sp}}dxdy,
$$
which is in fact also a norm in $W^{s,p}_0(\Omega)$. If $p=2$ and $f=0$, the operator becomes liner case $(-\Delta)^s$ and the corresponding equations is fractional Laplacian, denoted as
\begin{equation}\label{fraclap}
\left\{
\begin{array}{lr}
(-\Delta)^su=0,&in\ \Omega,\\
u=g,&on\ \partial\Omega.
\end{array}
\right.
\end{equation}
However, by \cite{GM} the equation (\ref{fraclap}) is not well-posedness corresponding to the well-posedness non-homogeneous Dirichlet boundary condition given by
\begin{equation}\label{fracplapwider}
\left\{
\begin{array}{lr}
(-\Delta)^su=0,&in\ \Omega,\\
u=g,&on\ \mathbb{R}^N\setminus\Omega.
\end{array}
\right.
\end{equation}
In other words, the ill-posedness of (\ref{fraclap}) and the well-posedness of (\ref{fracplapwider}) show that an $(-\Delta)^s$ function in a domain $\Omega$ cannot be determined only by its value on the boundary $\partial\Omega$, but depends on its value on the whole area $\mathbb{R}^N\setminus\Omega$. For more details in this direction, one can see such as \cite{BB,BH,ROS1,ROS2,ROS3,ROS4} etc. and references therein.

Then based on the information above, we utilize the admissible space for the operator $(-\Delta_p)^s$, the nonlocal Sobolev space $\bf\widetilde{W}^{s,p}_0(\Omega)$ defined as the closure of $C^{\infty}_0(\Omega)$ under the semi-norm
$$
[u]_{W^{s,p}(\mathbb{R}^N)}=\int_{\mathbb{R}^N\times\mathbb{R}^N}\frac{|u(x)-u(y)|^p}{|x-y|^{N+sp}}dxdy,
$$
which is in fact also a norm in $\widetilde{W}^{s,p}_0(\Omega)$. And if the boundary $\partial\Omega$ regular enough, such as Lipschitz, the space $\widetilde{W}^{s,p}_0(\Omega)$ is in coincidence with $W^{s,p}_0(\Omega)$, i.e., $\Omega$ can be extensible. For more information on this topic, one can refer to \cite{Zhou}. And in this section, we do not assume any regularity on $\partial\Omega$.

However, if we work in the space $\widetilde{W}^{s,p}_0(\Omega)$, we would find that it seems a little difficult to get uniform comparison estimations for a pair of $s$ and $s'$, not to mention a sequence of $s_j$.

Then for our special problem setting here, we utilize a $relative$-$nonlocal$ Sobolev space denoted as $\bf\widetilde{W}^{s,p}_{0,tR}(\Omega)$, in which, $t$ is large than 1, and $R$ denotes the diameter of $\Omega$, defined by
$$
R:=\sup_{x,y\in\Omega}\{|x-y|:\forall\ x,y\in\Omega\}.
$$

First we define the semi-norm
\begin{align}\label{nonhomseminorm}
[u]_{W^{s,p}_{tR}(\Omega)}:=
\left\{\int_{B_{tR}(\Omega)\times B_{tR}(\Omega)}\frac{|u(x)-u(y)|^p}{|x-y|^{N+sp}}dxdy\right\}^{\frac{1}{p}}
\end{align}
for any measurable function $u$ in $L^p(\Omega)$, in which, $B_{tR}(\Omega)$ is define as the $N$-dimensional ball with diameter $tR$ located at the same center as the smallest ball containing $\Omega$.

Now we define the $\bf relative$-$\bf nonlocal$ Sobolev space $\bf\widetilde{W}^{s,p}_{0,tR}(\Omega)$ as the completion of $C^{\infty}_0(\Omega)$ with respect to the semi-norm (\ref{nonhomseminorm})
\begin{align}\label{workspace}
\|u\|_{\widetilde{W}^{s,p}_{0,tR}(\Omega)}:=
\left\{\int_{B_{tR}(\Omega)\times B_{tR}(\Omega)}\frac{|u(x)-u(y)|^p}{|x-y|^{N+sp}}dxdy
\right\}^{\frac{1}{p}},\ \forall\ u\in C^{\infty}_0(\Omega).
\end{align}
This is a reflexive Banach space for $1<p<+\infty$ and $0<s<1$. $t$ is independent of $\Omega$.

In fact, this space is equivalent to $\widetilde{W}^{s,p}_{0}(\Omega)$ (see Appendix in \cite{Li}). Also this is a direct result by the sufficient and necessary condition for extensible domain (see \cite{Zhou}). Since the ball $B_{tR}(\Omega)$ clearly fits for the condition in \cite{Zhou}, then we can directly conclude that $\|u\|_{\widetilde{W}^{s,p}_{0}(\Omega)}\leq C\|u\|_{\widetilde{W}^{s,p}_{0,tR}(\Omega)}$. For more information on the space $\widetilde{W}^{s,p}_{0,tR}(\Omega)$, one can see our another paper \cite{Li}.

\subsection{$\Gamma$-convergence as $s_j\rightarrow s$ from Above}

In this subsection, we use $\Gamma$-convergence to investigate the asymptotic behaviour of the following equations with varying $s$,
\begin{equation}\label{fracplap}
\left\{
\begin{array}{lr}
(-\Delta_p)^su=f,&in\ \Omega,\\
u=0,&on\ B_{tR}(\Omega)\setminus\Omega,
\end{array}
\right.
\end{equation}
in the weak sense as
$$
\left\{
\begin{array}{lll}
u\in\widetilde{W}^{s,p}_{0,tR}(\Omega),\\
\int_{B_{tR}(\Omega)\times B_{tR}(\Omega)}\frac{|u(x)-u(y)|^{p-s}(u(x)-u(y))(v(x)-v(y))}{|x-y|^{N+sp}}dxdy\\
=\int_{\Omega}fvdx,\ for\ every\ v\in\widetilde{W}^{s,p}_{0,tR}(\Omega),
\end{array}
\right.
$$
for which, the variational form is
$$
\min_{u\in \widetilde{W}^{s,p}_{0,tR}(\Omega)}\left(\frac{1}{p}
\int_{B_{tR}(\Omega)\times B_{tR}(\Omega)}\frac{|u(x)-u(y)|^p}{|x-y|^{N+sp}}dxdy+\int_{\Omega}fudx\right).
$$
For the existence and uniqueness of solutions to this equation, one can refer to \cite{DCKP,FP,LL}. In fact, it is a very standard
approach based on the direct method and strict convexity of the semi-norm $W^{s,p}_{tR}(\Omega)$.

For every $0<s<1$ and $p\in(1,+\infty)$, let us define the functional $F_{s}(u)$ as
$$
F_s(u)=\frac{1}{p}\int_{B_{tR}(\Omega)\times B_{tR}(\Omega)}\frac{|u(x)-u(y)|^p}{|x-y|^{N+sp}}dxdy+\int_{\Omega}fudx.
$$

Let $\forall0<\epsilon\ll1$, $0<s<1$ and $p\in(1,+\infty)$, and let $(\widetilde{W}^{s+\epsilon,p}_{0,tR}(\Omega))^*$ denote the usual dual space of the space $\widetilde{W}^{s+\epsilon,p}_{0,tR}(\Omega)$. Since $\widetilde{W}^{s+\epsilon,p}_{0,tR}(\Omega)\hookrightarrow \widetilde{W}^{s,p}_{0,tR}(\Omega)$, we have
$$
(\widetilde{W}^{s,p}_{0,tR}(\Omega))^*\hookrightarrow(\widetilde{W}^{s+\epsilon,p}_{0,tR}(\Omega))^*.
$$

A sequence $\{F_k\}_k$ is said to be $equi-coercive$ if there exist a compact set $K\subset X$ such that
$$
\inf_XF_k=\inf_KF_k
$$
for each $k\in\mathbb{N}$ (see \cite{DalMaso,Braides}).

In the following theorem, we give the $\Gamma$-convergence on functionals $F_{s}(u)$.

\begin{Theorem}\label{sconvaboveThm}
Let $\Omega\subset\mathbb{R}^N$ be a bounded open set, $0<s<1$, $p\in(1,+\infty)$, and for $f\in(\widetilde{W}^{s,p}_{0,tR}(\Omega))^*$. If $\{s_j\}_j\subset(0,1)$ be non-increasing sequence converging to $s$, then the sequence $\{F_{s_j}\}_j$ defined on $L^p(\Omega)$ is equi-coercive in $L^p(\Omega)$, and $F_{s_j}(u)$ $\Gamma$-converges to $F_s(u)$ in $L^p(\Omega)$ at every $u\in L^p(\Omega)$ which satisfies
$$
\int_{B_{tR}(\Omega)\times B_{tR}(\Omega)}\frac{|u(x)-u(y)|^p}{|x-y|^{N+(s+\epsilon)p}}dxdy<+\infty.
$$
\end{Theorem}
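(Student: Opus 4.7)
The strategy splits into equi-coercivity, the $\Gamma$-$\liminf$ inequality, and the construction of a recovery sequence. Throughout I exploit two pointwise estimates on $B_{tR}(\Omega)$, which has diameter $D:=2tR$. From the factorisation $|x-y|^{-(N+ap)} = |x-y|^{-(N+bp)}|x-y|^{(b-a)p}$ together with $|x-y|\le D$, for $s_j\ge s$ one obtains
\[
\frac{1}{|x-y|^{N+sp}}\le \max(1,D^{(s_j-s)p})\,\frac{1}{|x-y|^{N+s_jp}},
\]
and for $s_j\in[s,s+\epsilon]$ one obtains
\[
\frac{1}{|x-y|^{N+s_jp}}\le \max(1,D^{\epsilon p})\,\frac{1}{|x-y|^{N+(s+\epsilon)p}}.
\]
Both constants are uniformly bounded in $j$; the first is a comparison estimate of $W^{s,p}$- by $W^{s_j,p}$-seminorms, the second supplies the dominating function needed for the recovery sequence.

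\textbf{Equi-coercivity and $\liminf$.} If $F_{s_j}(u)\le c$, then Young's inequality applied to the linear part, combined with the uniform bound of $\|f\|_{(\widetilde{W}^{s_j,p}_{0,tR}(\Omega))^*}$ provided by the first pointwise inequality and the embedding $(\widetilde{W}^{s,p}_{0,tR})^*\hookrightarrow(\widetilde{W}^{s_j,p}_{0,tR})^*$, yields a uniform bound on $[u]_{W^{s_j,p}_{tR}(\Omega)}$, hence on $[u]_{W^{s,p}_{tR}(\Omega)}$. The compact embedding $\widetilde{W}^{s,p}_{0,tR}(\Omega)\hookrightarrow L^p(\Omega)$ then exhibits a common compact subset of $L^p(\Omega)$ containing all sublevel sets, giving equi-coercivity. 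For the $\liminf$ inequality, given $u_j\to u$ in $L^p(\Omega)$ with $\liminf_j F_{s_j}(u_j)<+\infty$, the same estimates along a liminf-realising subsequence produce uniform boundedness in $\widetilde{W}^{s,p}_{0,tR}$; by reflexivity and compact embedding a sub-subsequence converges weakly to $u$ (so $u\in\widetilde{W}^{s,p}_{0,tR}(\Omega)$). Passing to a further almost-everywhere-convergent sub-subsequence and applying Fatou's lemma delivers
\[
\int_{B_{tR}(\Omega)\times B_{tR}(\Omega)}\frac{|u(x)-u(y)|^p}{|x-y|^{N+sp}}\,dxdy\le \liminf_j \int_{B_{tR}(\Omega)\times B_{tR}(\Omega)}\frac{|u_j(x)-u_j(y)|^p}{|x-y|^{N+s_jp}}\,dxdy,
\]
while the weak convergence paired with $f\in(\widetilde{W}^{s,p}_{0,tR})^*$ yields $\int_\Omega fu_j\,dx\to\int_\Omega fu\,dx$, and together these give $F_s(u)\le\liminf_j F_{s_j}(u_j)$.

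\textbf{Recovery sequence.} For $u\in L^p(\Omega)$ with $\int\int |u(x)-u(y)|^p|x-y|^{-N-(s+\epsilon)p}\,dxdy<+\infty$, I would simply take the constant sequence $u_j:=u$. After discarding finitely many terms so that $s_j\in[s,s+\epsilon]$, the second pointwise inequality provides an integrable majorant for $|u(x)-u(y)|^p|x-y|^{-N-s_jp}$, which converges pointwise to $|u(x)-u(y)|^p|x-y|^{-N-sp}$. The dominated convergence theorem then gives $F_{s_j}(u)\to F_s(u)$, which is strictly stronger than the required $\limsup_j F_{s_j}(u)\le F_s(u)$.

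\textbf{Main obstacle.} The one delicate feature is the non-monotone behaviour of $|x-y|^{-(N+s_jp)}$ in $s_j$ on $B_{tR}(\Omega)$: raising $s_j$ inflates the integrand when $|x-y|<1$ but shrinks it when $|x-y|>1$. This is exactly what forces the statement to restrict the $\Gamma$-limit to those $u$ with finite energy of order $s+\epsilon$ (so as to dominate the small-distance part uniformly in $j$), and why every comparison constant unavoidably carries a (harmless) dependence on $D=2tR$. No deep new ingredient is required beyond careful bookkeeping of these constants plus the standard reflexivity / compact-embedding machinery for $\widetilde{W}^{s,p}_{0,tR}(\Omega)$.
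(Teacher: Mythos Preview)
Your proposal is correct and follows essentially the same strategy as the paper: equi-coercivity via the compact embedding into $L^p$, the $\liminf$ inequality from uniform $\widetilde{W}^{s,p}_{0,tR}$-bounds along a subsequence, and recovery via the constant sequence $u_j=u$ justified by the $(s+\epsilon)$-energy hypothesis. The only minor technical differences are that the paper obtains the $\liminf$ from weak lower semicontinuity of the fixed $s$-seminorm together with the comparison $[w]_{s}^p\le (tR)^{(s_j-s)p}[w]_{s_j}^p$ rather than your direct Fatou argument, and phrases equi-coercivity via the compact set of minimisers rather than sublevel sets; also note that in the paper's convention $B_{tR}(\Omega)$ has \emph{diameter} $tR$, so $D=tR$ rather than $2tR$.
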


\begin{proof}
We observe that it is obviously that the infimum of each $F_{s_j}$ is attained in $\widetilde{W}^{s_j,p}_{0,tR}(\Omega)$. It is well known that the Fr\'{e}chet derivative of $F_{s_j}$ (i.e. Euler-Lagrange forms) is the functional on $\widetilde{W}^{s_j,p}_{0,tR}(\Omega)$ given by
$$
v\rightarrow\int_{B_{tR}(\Omega)\times B_{tR}(\Omega)}\frac{|u(x)-u(y)|^{p-2}(u(x)-u(y))(v(x)-v(y))}{|x-y|^{N+s_jp}}dxdy+\langle f,v\rangle,
$$
in which $\langle f,v\rangle$ denotes the usual dual product. The unique minimizer of $F_{s_j}$ in $\widetilde{W}^{s_j,p}_{0,tR}(\Omega)$ is just the solution $u_{s_j}$ to (\ref{FracLapEqu}) (see e.g. \cite{FP,LL,Palatucci}). Due to the Rellich-type embedding theorems, the closure $\overline{K}$ in $L^p(\Omega)$ of the set $K:=\{u_{s_j},j\in\mathbb{N}\}$ is compact. Again from the discussion above we infer that
$$
\inf_{L^p(\Omega)}F_{s_j}=F(u_{s_j})=\inf_{\overline{K}}F_{s_j}
$$
for each $j\in\mathbb{N}$. Then the sequence $\{F_{s_j}\}_j$ is equi-coercive. For more equivalent conditions on equi-coercive, one can refer to Chapter 2 and 7 in \cite{DalMaso}.

Now consider a sequence $\{w_{s_j}\}_j$ in $L^p(\Omega)$ that converges to $w$ in $L^p(\Omega)$. If
$$
\liminf_{j\rightarrow+\infty}\left(\frac{1}{p}
\int_{B_{tR}(\Omega)\times B_{tR}(\Omega)}\frac{|w_{s_j}(x)-w_{s_j}(y)|^p}{|x-y|^{N+s_jp}}dxdy+\langle f,w_{s_j}\rangle
\right)<+\infty,
$$
one can extract a subsequence (not relabelled) $\{w_{s_j}\}_j$ for which
$$
\begin{array}{lllll}
\lim\limits_{j\rightarrow+\infty}\frac{1}{p}
\int_{B_{tR}(\Omega)\times B_{tR}(\Omega)}\frac{|w_{s_j}(x)-w_{s_j}(y)|^p}{|x-y|^{N+s_jp}}dxdy+\langle f,w_{s_j}\rangle\\
=\liminf\limits_{j\rightarrow+\infty}\left(\frac{1}{p}
\int_{B_{tR}(\Omega)\times B_{tR}(\Omega)}\frac{|w_{s_j}(x)-w_{s_j}(y)|^p}{|x-y|^{N+s_jp}}dxdy+\langle f,w_{s_j}\rangle
\right)\\
=L<+\infty.
\end{array}
$$
Since $s\leq s_j$ and $f\in(\widetilde{W}^{s,p}_{0,tR}(\Omega))^*$, one can easily have that
$$
\frac{1}{p}\int_{B_{tR}(\Omega)\times B_{tR}(\Omega)}\frac{|w_{s_j}(x)-w_{s_j}(y)|^p}{|x-y|^{N+s_jp}}dxdy\leq
C\left(1+\|f\|_{(\widetilde{W}^{s_j,p}_{0,tR}(\Omega))^*}\|w_{s_j}\|_{\widetilde{W}^{s_j,p}_{0,tR}(\Omega)}\right)
$$
for some positive constant $C$ and each $j\in\mathbb{N}$. Then by Young's inequality we have that the sequence $\{w_{s_j}\}$ is uniformly bounded in $\widetilde{W}^{s,p}_{0,tR}(\Omega)$ by Sobolev-type embedding (see \cite{DNPV,Li}). Then thanks to the reflexivity of the space $\widetilde{W}^{s,p}_{0,tR}(\Omega)$ together with again the Sobolev-type embedding we have that $w\in \widetilde{W}^{s,p}_{0,tR}(\Omega)$. Then without loss of generality one can consider the weak convergence in $\widetilde{W}^{s,p}_{0,tR}(\Omega)$
$$
w_{s_j}\rightharpoonup w.
$$

For simplicity, we denote the diameter of $\Omega$ by $R$. Then by the weak lower semi-continuity one has
$$
\begin{array}{lrrrr}
\frac{1}{p}\int_{B_{tR}(\Omega)\times B_{tR}(\Omega)}\frac{|w(x)-w(y)|^p}{|x-y|^{N+sp}}dxdy\\
\leq\liminf\limits_{j\rightarrow+\infty}
\frac{1}{p}\int_{B_{tR}(\Omega)\times B_{tR}(\Omega)}\frac{|w_{s_j}(x)-w_{s_j}(y)|^p}{|x-y|^{N+sp}}dxdy\\
\leq\liminf\limits_{j\rightarrow+\infty}\frac{1}{p}{tR}^{(s_j-s)p}
\int_{B_{tR}(\Omega)\times B_{tR}(\Omega)}\frac{|w_{s_j}(x)-w_{s_j}(y)|^p}{|x-y|^{N+s_jp}}dxdy\\
=\liminf\limits_{j\rightarrow+\infty}(\frac{1}{p}{tR}^{(s_j-s)p}
\int_{B_{tR}(\Omega)\times B_{tR}(\Omega)}\frac{|w_{s_j}(x)-w_{s_j}(y)|^p}{|x-y|^{N+s_jp}}dxdy\\
+\langle f,w_{s_j}\rangle-\langle f,w_{s_j}\rangle)=L-\langle f,w\rangle.
\end{array}
$$

In fact, if we check the process above carefully, let $F_s(w)=+\infty$, then
$$
\liminf\limits_{j\rightarrow+\infty}\frac{1}{p}
\int_{B_{tR}(\Omega)\times B_{tR}(\Omega)}\frac{|w_{s_j}(x)-w_{s_j}(y)|^p}{|x-y|^{N+s_jp}}dxdy+\langle f,w_{s_j}\rangle=+\infty;
$$
if it is this case, then obviously
$$
F_s(w)=+\infty=
\liminf_{j\rightarrow+\infty}\frac{1}{p}
\int_{B_{tR}(\Omega)\times B_{tR}(\Omega)}\frac{|w_{s_j}(x)-w_{s_j}(y)|^p}{|x-y|^{N+s_jp}}dxdy+\langle f,w_{s_j}\rangle.
$$
So it follows from above arguments that if $w_{s_j}\rightarrow w$ in $L^p(\Omega)$, then we have
$$
F_{s_j}(w_{s_j})\rightarrow F_s(w)\ in\ [0,+\infty].
$$

We can complete the proof of the $\Gamma$-convergence by observing that for each $u\in L^p(\Omega)$,
$$
\begin{array}{l}
\lim\limits_{j\rightarrow+\infty}F_{s_j}(u)=\lim\limits_{j\rightarrow+\infty}\frac{1}{p}
\int_{B_{tR}(\Omega)\times B_{tR}(\Omega)}\frac{|u(x)-u(y)|^p}{|x-y|^{N+s_jp}}dxdy+\langle f,u\rangle\\
=\frac{1}{p}\int_{B_{tR}(\Omega)\times B_{tR}(\Omega)}\frac{|u(x)-u(y)|^p}{|x-y|^{N+sp}}dxdy+\langle f,u\rangle=F_s(u).
\end{array}
$$
\end{proof}

\subsection{$\Gamma$-convergence as $s_j\rightarrow s$ from Below}

In this subsection, we just give the $\Gamma$-convergence result of some free functionals to express some special characters of the asymptotic behaviours from below. We can see there is something different from the case converging from above. And in order to make the difference clear, we do not use the $relative$-$nonlocal$ setting. On the other hand, we would not use the Rellich-type compact embedding property, which needs the extension assumption of $\partial\Omega$ (see \cite{Zhou}), so we can be away from the relative-nonlocal setting for a while. We just work on the usual Sobolev space $W^{s,p}(\Omega)$, and this does not change the intrinsic quality.

Now we should make some modifications on the space we work on. Let $1<p<+\infty$, $0<s<1$ and $\Omega\subset\mathbb{R}^N$ be an open bounded set. We set
$$
W^{s^-,p}_0(\Omega)=W^{s,p}(\Omega)\cap\left(\bigcap_{0<k<s}W^{k,p}_0(\Omega)\right)
=\bigcap_{0<k<s}\left(W^{s,p}(\Omega)\cap W^{k,p}_0(\Omega)\right),
$$
where $W^{s,p}_0(\Omega)$ is the complete closure of $C^{\infty}_0(\Omega)$ under the semi-norm $W^{s,p}(\Omega)$ defined by
$$
[u]_{W^{s,p}(\Omega)}:=\int_{\Omega\times\Omega}\frac{|u(x)-u(y)|^p}{|x-y|^{N+sp}}dxdy.
$$
We can clearly see that $W^{s^-,p}_0(\Omega)$ is a closed vector space of $W^{s,p}(\Omega)$ satisfying
$$
W^{s,p}_0(\Omega)\subset W^{s^-,p}_0(\Omega).
$$

We define two functionals $\mathcal{E}^s$ and $\underline{\mathcal{E}}^s$, mapping $L^p(\Omega)$ to $[0,+\infty]$ as
$$
\begin{array}{lll}
\mathcal{E}^s=\left\{
\begin{array}{lr}
\int_{\Omega\times\Omega}\frac{|u(x)-u(y)|^p}{|x-y|^{N+sp}}dxdy, & if\ u\in W^{s,p}_0(\Omega),\\
+\infty & otherwise.
\end{array}
\right.\\

\underline{\mathcal{E}}^s=\left\{
\begin{array}{lr}
\int_{\Omega\times\Omega}\frac{|u(x)-u(y)|^p}{|x-y|^{N+sp}}dxdy, & if\ u\in W^{s^-,p}_0(\Omega),\\
+\infty & otherwise.
\end{array}
\right.
\end{array}
$$

For preparation we need the following definition.
\begin{Def}\label{relaxfun}
For every function $F:X\rightarrow\overline{\mathbb{R}}$ the $lower$ $semi$-$continuous$ $envelop$ (or $relaxed$ $function$) $sc^-F$ of $F$ is defined for every $x\in X$ by
$$
(sc^-F)(x)=\sup\limits_{G\in\mathcal{G}(F)}G(x),
$$
where $\mathcal{G}(F)$ is the set of all lower semi-continuous functions $G$ on $X$ such that $G(y)\leq F(y)$ for every $y\in X$.
\end{Def}
We can see that in fact $sc^-F$ is the greatest lower semi-continuous function majorized by $F$. For more information on the relax function and the relations with $\Gamma$-convergence function one can see Chapter $3-5$ in \cite{DalMaso}.

Now we introduce the following proposition in \cite{DalMaso}.

\begin{Proposition}[\cite{DalMaso} Proposition 5.4]\label{claim}
If $(F_h)$ is an increasing sequence, then
$$
\Gamma-\lim\limits_{h\rightarrow+\infty}F_h=\lim\limits_{h\rightarrow+\infty}sc^-F_h=\sup\limits_{h\in\mathbb{N}}sc^-F_h.
$$
\end{Proposition}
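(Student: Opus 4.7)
The plan is to separate the triple equality into two parts. Since $F_h \leq F_{h+1}$ pointwise and every lower semicontinuous minorant of $F_h$ is automatically a lower semicontinuous minorant of $F_{h+1}$, the sequence $(sc^- F_h)$ is itself increasing in $h$. Consequently its pointwise limit agrees with its supremum, and the limit function $F := \sup_h sc^- F_h$ is lower semicontinuous, being a supremum of lower semicontinuous functions. It then remains to identify $F$ with the $\Gamma$-limit of $(F_h)$.

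For the $\Gamma$-$\liminf$ inequality, I would fix $u \in X$ and an arbitrary sequence $u_n \to u$ in $X$. For every index $h$ and every $n \geq h$, monotonicity yields $F_n(u_n) \geq F_h(u_n) \geq sc^- F_h(u_n)$, and the lower semicontinuity of $sc^- F_h$ gives $\liminf_n sc^- F_h(u_n) \geq sc^- F_h(u)$. Therefore $\liminf_n F_n(u_n) \geq sc^- F_h(u)$ holds for every $h$, and taking the supremum in $h$ delivers $\liminf_n F_n(u_n) \geq F(u)$.

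The construction of a recovery sequence is the principal obstacle, and I intend to handle it by a diagonal argument rooted in the metric characterization of the lower semicontinuous envelope. If $F(u) = +\infty$, the required bound is trivial, so assume $F(u) < +\infty$; then $sc^- F_h(u) < +\infty$ for every $h$. For each $h$, select a sequence $y^{(h)}_n \to u$ satisfying $F_h(y^{(h)}_n) \to sc^- F_h(u)$, whose existence follows from the standard identity $sc^- F_h(u) = \inf\{\liminf_n F_h(y_n) : y_n \to u\}$ in metric spaces. Next pick indices $n_h$ with $d(y^{(h)}_{n_h}, u) < 1/h$ and $F_h(y^{(h)}_{n_h}) \leq sc^- F_h(u) + 1/h$, and set $u_h := y^{(h)}_{n_h}$. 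Then $u_h \to u$ and
$$
\limsup_{h \to +\infty} F_h(u_h) \leq \limsup_{h \to +\infty} \Bigl( sc^- F_h(u) + 1/h \Bigr) = \sup_h sc^- F_h(u) = F(u).
$$

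Combining both estimates identifies $\Gamma\text{-}\lim_h F_h = F = \sup_h sc^- F_h = \lim_h sc^- F_h$. The subtlety I expect to be hardest is in the recovery step: one must ensure that the diagonal indices $n_h$ can be chosen so that $F_h$ (not $F_{n_h}$) evaluated at $y^{(h)}_{n_h}$ stays close to $sc^- F_h(u)$. This is feasible precisely because the approximating sequences are selected \emph{separately} for each level $h$ and then compressed diagonally, with monotonicity of $(sc^- F_h)$ ensuring the error terms $1/h$ can be absorbed into the supremum.
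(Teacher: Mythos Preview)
The paper does not prove this proposition at all: it is quoted verbatim from Dal Maso's monograph and invoked as a black box in the proof of Theorem~\ref{gammabelow}. So there is no ``paper's own proof'' to compare against.

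Your argument is correct and self-contained for the metric-space setting declared in Definition~\ref{GammaConvDef}. The monotonicity of $(sc^-F_h)$, the $\liminf$ estimate via $F_n\ge F_h\ge sc^-F_h$ for $n\ge h$, and the diagonal extraction of a recovery sequence from level-wise approximants $y^{(h)}_n$ are all sound; the step you flagged as delicate---choosing $n_h$ so that $F_h(y^{(h)}_{n_h})\le sc^-F_h(u)+1/h$---works precisely because you built the approximating sequence for each fixed $h$ separately. For context, Dal Maso's original proof is formulated in a general topological space using the neighbourhood definition of the $\Gamma$-limit: since $\inf_U F_h$ is increasing in $h$ for every open $U$, one interchanges two suprema to obtain $\sup_U\sup_h\inf_U F_h=\sup_h\sup_U\inf_U F_h=\sup_h sc^-F_h$. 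Your sequential route is the natural translation of that argument to the metric framework of this paper and is arguably more transparent for readers used to the sequential Definition~\ref{GammaConvDef}.
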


\begin{Theorem}\label{gammabelow}
For every sequence $\{s_j\}_j\subset(0,1)$ strictly increasing to $s\in(0,1)$, $1<p<+\infty$, let $\Omega$ be an open bounded set in $\mathbb{R}^N$, then it holds
$$
\Gamma-\lim_{j\rightarrow+\infty}\underline{\mathcal{E}}^{s_j}
=\Gamma-\lim_{j\rightarrow+\infty}\mathcal{E}^{s_j}=\underline{\mathcal{E}}^{s}.
$$
\end{Theorem}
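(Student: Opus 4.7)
The plan is a sandwich argument based on the pointwise inequality $\underline{\mathcal{E}}^{s_j}\leq\mathcal{E}^{s_j}$, which follows from the inclusion $W^{s_j,p}_0(\Omega)\subset W^{s_j^-,p}_0(\Omega)$: for any $u\in W^{s_j,p}_0(\Omega)$ and any $k<s_j$, splitting the Gagliardo seminorm over $\{|x-y|\leq 1\}$ and $\{|x-y|>1\}$ gives $[u]_{W^{k,p}(\Omega)}\leq[u]_{W^{s_j,p}(\Omega)}+C\|u\|_{L^p(\Omega)}$, and Poincar\'e on $W^{s_j,p}_0(\Omega)$ then places $u$ in $W^{k,p}_0(\Omega)$. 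It will therefore suffice to prove (A) the liminf inequality $\underline{\mathcal{E}}^s(u)\leq\liminf_{j\to+\infty}\underline{\mathcal{E}}^{s_j}(u_j)$ whenever $u_j\to u$ in $L^p(\Omega)$, and (B) a recovery sequence $u_j\to u$ with $\limsup_{j\to+\infty}\mathcal{E}^{s_j}(u_j)\leq\underline{\mathcal{E}}^s(u)$; (A) then propagates to $\mathcal{E}^{s_j}$ because $\mathcal{E}^{s_j}\geq\underline{\mathcal{E}}^{s_j}$, and (B) propagates to $\underline{\mathcal{E}}^{s_j}$ because $\underline{\mathcal{E}}^{s_j}\leq\mathcal{E}^{s_j}$, so both $\Gamma$-limits collapse onto $\underline{\mathcal{E}}^s$.

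For (B) I would take the constant sequence $u_j\equiv u$. Assuming $\underline{\mathcal{E}}^s(u)<+\infty$, the function $u$ lies in $W^{s^-,p}_0(\Omega)$ and hence in $W^{s_j,p}_0(\Omega)$ for every $j$ by the very definition of $W^{s^-,p}_0(\Omega)$ (take $k=s_j$ in the defining intersection), so $u$ is admissible for $\mathcal{E}^{s_j}$. Since $|x-y|^{-N-s_jp}\leq|x-y|^{-N-sp}$ on $\{|x-y|\leq 1\}$ and $|x-y|^{-N-s_jp}\leq 1$ on $\{|x-y|>1\}$, the integrand $|u(x)-u(y)|^p|x-y|^{-N-s_jp}$ is dominated by an integrable function (combining $u\in W^{s,p}(\Omega)$ with $u\in L^p(\Omega)$), and dominated convergence yields $\mathcal{E}^{s_j}(u)\to\underline{\mathcal{E}}^s(u)$.

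For (A), supposing $L:=\liminf_j\underline{\mathcal{E}}^{s_j}(u_j)<+\infty$, I pass to a subsequence (unrelabelled) that realizes the liminf and along which $u_j\to u$ almost everywhere. The crux is to show $u\in W^{s^-,p}_0(\Omega)$. Fix $k\in(0,s)$; for $j$ large $k<s_j$, and the same near-/far-diagonal split as in the opening paragraph bounds $[u_j]_{W^{k,p}(\Omega)}^p$ by $\underline{\mathcal{E}}^{s_j}(u_j)+C\|u_j\|_{L^p(\Omega)}^p$, uniformly in $j$. Since $u_j\in W^{s_j^-,p}_0(\Omega)\subset W^{k,p}_0(\Omega)$ and $W^{k,p}_0(\Omega)$ is weakly closed in the reflexive space $W^{k,p}(\Omega)$, the $L^p$-limit $u$ belongs to $W^{k,p}_0(\Omega)$; since $k$ was arbitrary, $u\in\bigcap_{k<s}W^{k,p}_0(\Omega)$. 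Fatou's lemma applied to the nonnegative integrands $|u_j(x)-u_j(y)|^p|x-y|^{-N-s_jp}$, whose pointwise a.e. limit equals $|u(x)-u(y)|^p|x-y|^{-N-sp}$, then delivers
\[
\int_{\Omega\times\Omega}\frac{|u(x)-u(y)|^p}{|x-y|^{N+sp}}dxdy\leq L,
\]
confirming $u\in W^{s,p}(\Omega)$, hence $u\in W^{s^-,p}_0(\Omega)$, with $\underline{\mathcal{E}}^s(u)\leq L$.

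The hard part will be locating $u$ in the uncountable intersection defining $W^{s^-,p}_0(\Omega)$: weak compactness has to be extracted simultaneously at every regularity level $k<s$, and the non-locality of the seminorms together with the absence of a diameter bound on $\Omega$ forces the near-/far-diagonal split used throughout. One could try to invoke Proposition \ref{claim} instead, after verifying that both $\mathcal{E}^{s_j}$ and $\underline{\mathcal{E}}^{s_j}$ are $L^p(\Omega)$-lower semicontinuous (so that $sc^-F_j=F_j$); however, the monotonicity hypothesis $F_j\leq F_{j+1}$ holds pointwise only after rescaling $\Omega$ to diameter at most $1$ (the far-diagonal contribution reverses the order otherwise), which is why the direct liminf/recovery route outlined above seems preferable.
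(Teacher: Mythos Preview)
Your argument is correct but takes a different route from the paper, which does precisely what you dismiss in your final paragraph: it sets $F^{s_j}:=R^{s_jp}\mathcal{E}^{s_j}$ and $\underline{F}^{s_j}:=R^{s_jp}\underline{\mathcal{E}}^{s_j}$ (with $R=\mathrm{diam}\,\Omega$), observes that $|x-y|\leq R$ on $\Omega\times\Omega$ forces $(R/|x-y|)^{s_jp}\leq(R/|x-y|)^{s_{j+1}p}$ so both sequences are increasing with common pointwise limit $\underline{F}^s$, checks lower semicontinuity, and invokes Proposition~\ref{claim}; since $R^{s_jp}\to R^{sp}>0$, the $\Gamma$-limit transfers back to the original functionals. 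This is a few lines. Your direct liminf/recovery verification works, but carries avoidable baggage: you speak of ``the absence of a diameter bound on $\Omega$'', yet $\Omega$ is bounded by hypothesis, so the near/far split at $|x-y|=1$ is unnecessary---the single inequality $[u]_{W^{k,p}(\Omega)}^p\leq R^{(s-k)p}[u]_{W^{s,p}(\Omega)}^p$ replaces it throughout, yields $W^{s_j,p}_0(\Omega)\subset W^{k,p}_0(\Omega)$ without Poincar\'e, and makes the dominated-convergence step in (B) immediate. Your worry about extracting compactness ``simultaneously at every regularity level $k<s$'' is also overstated: your own argument handles each $k$ separately, and since the limit $u$ is already pinned down by the $L^p$-convergence, the $k$-dependent subsequences cause no trouble. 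The trade-off is that your route is self-contained and bypasses the relaxed-function machinery, while the paper's rescaling trick---which you identified but declined to pursue---buys a much shorter proof.
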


\begin{proof}
Let $R$ denote the diameter of $\Omega$.
Define $F^s$ and $F^{s_j}$ as mapping $L^p(\Omega)$ to $[0,+\infty]$ by
$$
F^s(u)=R^{sp}\mathcal{E}^s(u),\ \underline{F}^{s}(u)=R^{sp}\underline{\mathcal{E}}^s(u).
$$
Then clearly $F^s$ and $\underline{F}^s$ are lower semi-continuous, and the sequences $\{F^{s_j}\}$ and $\{\underline{F}^{s_j}\}$ are both increasing and pointwise convergent to $\underline{F}^s$. Indeed, for $0<k\leq s<1$, this is just a simple calculation as
$$
\begin{array}{lll}
\int_{\Omega\times\Omega}\frac{|u(x)-u(y)|^p}{|x-y|^{N+kp}}dxdy&\leq&
\int_{\Omega\times\Omega}\frac{|u(x)-u(y)|^p}{|x-y|^{N+sp+(k-s)p}}dxdy\\
&\leq& R^{(s-k)p}\int_{\Omega\times\Omega}\frac{|u(x)-u(y)|^p}{|x-y|^{N+sp}}dxdy.
\end{array}
$$
Then thanks to Proposition \ref{claim}, we infer that
$$
\Gamma-\lim_{j\rightarrow+\infty}\underline{F}^{s_j}
=\Gamma-\lim_{j\rightarrow+\infty}F^{s_j}=\underline{F}^{s},
$$
and then the assertion easily follows.
\end{proof}

From the Theorem above, we can see that the result is not as smooth as the case in Theorem \ref{sconvaboveThm} to get the accumulation function belong to the ideal space $W^{s,p}_0(\Omega)$, but a wider space $W^{s^-,p}_0(\Omega)$. And as a byproduct we immediately establish the following results.
\begin{Corollary}\label{equiformCol}
For every $s\in(0,1)$, $1<p<+\infty$, let $\Omega$ be an open bounded set in $\mathbb{R}^N$, then the following conditions are equivalent:\\
\indent(i) For every sequence $\{s_j\}_j\subset(0,1)$ strictly increasing to $s\in(0,1)$, it holds
$$
\Gamma-\lim_{j\rightarrow+\infty}\mathcal{E}^{s_j}=\mathcal{E}^s;
$$
\indent(ii) $W^{s^-,p}_0(\Omega)=W^{s,p}_0(\Omega)$.
\end{Corollary}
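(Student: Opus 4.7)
The plan is to read the corollary as a tautological consequence of Theorem~\ref{gammabelow} together with the way the two functionals $\mathcal{E}^s$ and $\underline{\mathcal{E}}^s$ differ only in their effective domains. Concretely, Theorem~\ref{gammabelow} tells us that for \emph{every} sequence $\{s_j\} \subset (0,1)$ strictly increasing to $s$ one has
$$
\Gamma\text{-}\lim_{j\to+\infty}\mathcal{E}^{s_j} \;=\; \underline{\mathcal{E}}^{s},
$$
so condition (i) is equivalent to the pointwise equality of functionals $\mathcal{E}^s = \underline{\mathcal{E}}^s$ on $L^p(\Omega)$. The whole corollary thus reduces to showing that this identity of functionals is equivalent to the identity of spaces $W^{s,p}_0(\Omega) = W^{s^-,p}_0(\Omega)$.

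First I would do the direction (ii)$\Rightarrow$(i). Assuming $W^{s,p}_0(\Omega) = W^{s^-,p}_0(\Omega)$, the two functionals $\mathcal{E}^s$ and $\underline{\mathcal{E}}^s$ are given by the identical Gagliardo integral and now have identical effective domains, hence $\mathcal{E}^s \equiv \underline{\mathcal{E}}^s$; then Theorem~\ref{gammabelow} immediately gives (i).

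Next I would establish the direction (i)$\Rightarrow$(ii). Granted (i), the identification $\mathcal{E}^s = \underline{\mathcal{E}}^s$ holds on all of $L^p(\Omega)$; comparing effective domains, a function $u \in L^p(\Omega)$ makes $\mathcal{E}^s(u)$ finite exactly when $u \in W^{s,p}_0(\Omega)$, while $\underline{\mathcal{E}}^s(u)$ is finite exactly when $u \in W^{s^-,p}_0(\Omega)$ (recall that the latter already forces $u \in W^{s,p}(\Omega)$, so the Gagliardo integral at the level $s$ is automatically finite there). The equality of functionals therefore forces equality of domains, i.e.\ $W^{s^-,p}_0(\Omega) \subseteq W^{s,p}_0(\Omega)$; the reverse inclusion is built into the definition.

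The only step that deserves any care, and which I would flag as the single potential obstacle, is verifying that the $\Gamma$-limit supplied by Theorem~\ref{gammabelow} is actually independent of the choice of the approximating sequence $\{s_j\}$ — otherwise condition (i), quantified over all sequences, would be strictly stronger than $\mathcal{E}^s = \underline{\mathcal{E}}^s$. But this is guaranteed by the monotonicity argument in the proof of Theorem~\ref{gammabelow}: the family $\{F^{s}\}_{s}$ is increasing in $s$ and $\Gamma$-convergence of an increasing sequence of lower semi-continuous functionals coincides with the pointwise supremum (Proposition~\ref{claim}), so the same $\underline{\mathcal{E}}^s$ is produced by \emph{any} sequence $s_j \nearrow s$. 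With this observation in hand, the equivalence (i)$\Leftrightarrow$(ii) follows without further work.
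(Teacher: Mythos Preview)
Your proposal is correct and is precisely the argument the paper has in mind: the corollary is presented there without explicit proof, merely as an immediate ``byproduct'' of Theorem~\ref{gammabelow}, and you have spelled out exactly that deduction --- identify the $\Gamma$-limit via Theorem~\ref{gammabelow} as $\underline{\mathcal{E}}^{s}$, invoke uniqueness of $\Gamma$-limits to reduce (i) to the pointwise identity $\mathcal{E}^{s}=\underline{\mathcal{E}}^{s}$, and then compare effective domains. Your remark on sequence-independence is a helpful clarification the paper leaves tacit.
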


\begin{Remark}
We want to mention that we can also establish similar result like Corollary \ref{equiformCol} in our $relative$-$nonlocal$ setting. For other equivalent forms one can refer to \cite{Li}, in which, we have also established some other equivalent forms of the space $\widetilde{W}^{s^-,p}_{0,tR}(\Omega)$ (see \cite{Li}) in the $relative$-$nonlocal$ setting under no regularity assumptions on $\partial\Omega$.
\end{Remark}

\medskip
{\bf Acknowledgements:}
Many thanks to Professor SUGIMOTO Mitsuru for his detailed discussion on the understanding of some concepts and some proof in this paper. And the author is also indebted with Professor Terasawa Yutaka for his helpful suggestions and comments on some results in this paper. The author was supported by CSC Scholarship of Chinese Government (Grant No. 201506190121).
\medskip

%%%%%%%%%%%-----------------%%%%%%%%%%%%%%%%%%%%%%%%%%%%%%%%%%%%%%%%%%%%%%%%%%

\end{document}